\newtheorem{theorem}{Theorem}[section]
\newtheorem{remark}[theorem]{Remark}
\newtheorem{lemma}[theorem]{Lemma}
\newtheorem{proposition}[theorem]{Proposition}
\newtheorem{corollary}[theorem]{Corollary}
\newenvironment{defn}[1][]{\refstepcounter{theorem}\begin{trivlist}
\item[\hskip \labelsep {\bfseries Definition  \thetheorem  \, \def\temp{#1}\ifx\temp\empty  #1\else  (#1)\fi
}]}   {\end{trivlist}}
\newcommand{\N}{\mathbb{N}}
\newcommand{\im}{\operatorname{img}}
\newcommand{\C}{\operatorname{Core}}
\DeclareMathOperator{\AdVn}{\A(dV_n)}
\DeclareMathOperator{\OdVn}{\Out(dV_n)}
\DeclareMathOperator{\A}{Aut}
\DeclareMathOperator{\Out}{Out}
\newcommand{\makeset}[2]{\left\lbrace #1 \;\middle|\;
  \begin{tabular}{@{}l@{}}
    #2
   \end{tabular}
  \right\rbrace}
\author{Luke Elliott}
\title{A description of \(\AdVn\) and \(\OdVn\) using transducers}
\begin{document}

\maketitle
\tableofcontents

\begin{abstract}
The groups \(dV_n\) are an infinite family of groups, first introduced by C.
Mart\'inez-P\'erez, F. Matucci and B. E. A. Nucinkis, which includes both the Higman-Thompson groups \(V_n(=1V_n)\) and the Brin-Thompson groups \(nV(=nV_2)\).
A description of the groups \(\operatorname{Aut}(G_{n, r})\) (including the groups \(G_{n,1}=V_n\)) has previously been given by C. Bleak, P. Cameron, Y. Maissel, A. Navas, and F. Olukoya. Their description uses the transducer representations of homeomorphisms of Cantor space introduced a paper of R. I. Grigorchuk,  V. V. Nekrashevich, and V. I. Sushchanskii, together with a theorem of M. Rubin. We generalise the transducers of the latter paper and make use of these transducers to give a description of \(\A(dV_n)\) which extends the description of \(\A(1V_n)\) given in the former paper.
We make use of this description to show that \(\Out(dV_2) \cong \Out(V_2)\mathlarger{\mathlarger{\mathlarger{\wr}}} S_d\),
and more generally give a natural embedding of \(\Out(dV_n)\) into \(\Out(G_{n, n-1}) \mathlarger{\mathlarger{\mathlarger{\wr}}} S_d\).
\end{abstract}

\section{Introduction}

In Matthew G. Brin's 2004 paper \cite{Brin2004}, he introduces the family of simple groups \(dV\), which serve as \(d\)-dimensional analogues to Thompson's group \(V\). The present paper is concerned with finding a ``nice'' way to represent the automorphism groups of the groups \(dV\). To do this we follow a similar path to that in \cite{bleak2016}, but via a more category theoretic perspective. This enables us to prove a conjecture made by Nathan Barker in 2012:

\begin{theorem}\label{1sttheorem}
For all \(d\geq 1\), we have
\(\Out(dV) \cong \Out(V)\mathlarger{\mathlarger{\mathlarger{\wr}}} S_d\) (using the standard action of \(S_d\) on \(d\) points).
\end{theorem}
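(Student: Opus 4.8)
The plan is to combine Rubin's theorem with the transducer description of \(\A(dV)\) established above, thereby reducing the theorem to a surjectivity statement about the natural embedding \(\Theta\colon \Out(dV)\hookrightarrow \Out(V)\wr S_d\). First I would record the ambient identifications. By Rubin's theorem applied to the faithful action of \(dV=dV_2\) on the \(d\)-dimensional Cantor space \(\mathfrak{C}^d\), where \(\mathfrak{C}=\{0,1\}^{\N}\), every automorphism of \(dV\) is spatial, so \(\A(dV)\cong N_{\Homeo(\mathfrak{C}^d)}(dV)\); since \(dV\) is simple with trivial centralizer in \(\Homeo(\mathfrak{C}^d)\), this descends to \(\Out(dV)\cong N_{\Homeo(\mathfrak{C}^d)}(dV)/dV\). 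The embedding \(\Theta\) sends the class of a normalizing homeomorphism \(h\) to \(\big(([\psi_1(h)],\dots,[\psi_d(h)]);\pi(h)\big)\), where \(\pi(h)\in S_d\) records the induced permutation of the \(d\) coordinate directions and each \(\psi_i(h)\in \A(V)=N_{\Homeo(\mathfrak{C})}(V)\) is the homeomorphism that \(h\) induces on the \(i\)-th factor after undoing \(\pi(h)\). As \(\Theta\) is already known to be an injective homomorphism, it suffices to prove it is onto.

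For surjectivity I would exhibit explicit normalizers realizing generators of \(\Out(V)\wr S_d=\Out(V)^d\rtimes S_d\). To hit the top group \(S_d\), take the coordinate-permutation homeomorphism \(P_\sigma\colon (x_1,\dots,x_d)\mapsto (x_{\sigma^{-1}(1)},\dots,x_{\sigma^{-1}(d)})\) for \(\sigma\in S_d\); conjugation by \(P_\sigma\) merely relabels the coordinate directions of a multidimensional dyadic rearrangement, so \(P_\sigma\in N_{\Homeo(\mathfrak{C}^d)}(dV)\) and \(\Theta([P_\sigma])=(\mathrm{id},\dots,\mathrm{id};\sigma)\). To hit the base group \(\Out(V)^d\), for any classes \([\phi_1],\dots,[\phi_d]\in\Out(V)\) choose representing homeomorphisms \(\psi_i\in \A(V)=N_{\Homeo(\mathfrak{C})}(V)\) and form the product homeomorphism \(\psi_1\times\cdots\times\psi_d\) of \(\mathfrak{C}^d\). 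The key claim is that this product again normalizes \(dV\) and that \(\Theta\) sends its class to \(\big(([\psi_1],\dots,[\psi_d]);\mathrm{id}\big)\). Granting the claim, the images of the \(P_\sigma\) and of the product homeomorphisms generate all of \(\Out(V)\wr S_d\), so \(\Theta\) is onto and hence an isomorphism.

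The main obstacle is the key claim, namely that \(\psi_1\times\cdots\times\psi_d\) normalizes \(dV\) for arbitrary \(\psi_i\in \A(V)\). This is where the transducer machinery does the real work: I would represent each \(\psi_i\) by its associated bi-synchronizing transducer over \(\{0,1\}\) and observe that conjugating an element of \(dV\) — a homeomorphism whose local action is dyadic in each coordinate — by the product homeomorphism amounts to applying these transducers coordinatewise, which again produces a homeomorphism with dyadic local action in each coordinate, i.e. an element of \(dV\). The point at which \(n=2\) is essential is precisely here: for \(V=V_2=G_{2,1}\) every class of \(\Out(V)\) is realized by such a transducer with no compatibility constraint between the factors, so the whole base group \(\Out(V)^d\) is attained, whereas for general \(n\) the relevant group is \(G_{n,n-1}\) and the gluing conditions across factors can obstruct realizing arbitrary tuples, which is exactly why the general statement is only an embedding. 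Finally I would check that the induced factor maps of the product are the original \(\psi_i\), so that \(\Theta\) has the asserted image, and that the \(P_\sigma\) conjugate the product normalizers by permuting the factors, confirming the semidirect-product action and hence the full wreath-product structure.
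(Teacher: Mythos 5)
Your proposal is correct, but it finishes the argument by a genuinely different route than the paper. The paper never proves surjectivity of the embedding directly for \(n=2\): it characterises the image of the embedding of \(\Out(dV_n)\) into \(\Out(G_{n,n-1})\wr S_d\) exactly, via the signature homomorphism (Lemma~\ref{lemma2}, whose converse direction manufactures a realising homeomorphism from an arbitrary tuple with trivial signature product using a prefix-code bijection), obtains \(\Out(dV_n)\cong \ker(\overline{\text{sig}_d})\rtimes S_d\), and then merely observes that \(\mathbb{Z}/(n-1)\mathbb{Z}\) is trivial when \(n=2\). You instead take the embedding \(\Theta\) (Corollary~\ref{wreath_embedding}) as given and hit generators of \(\Out(V)\wr S_d\) with explicit normalisers: coordinate permutations for the top group (exactly the homeomorphisms used in the proof of Theorem~\ref{first_semidirect}), and products \(\psi_1\times\cdots\times\psi_d\) of one-dimensional normalisers for the base group. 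For \(n=2\) this is legitimate and arguably cleaner: every class in \(\mathcal{O}_{2,1}=\Out(V)\) lifts to \(\mathcal{B}_{2,1}=\A(V)\) by definition, and the product of synchronizing invertible \((1,2)\)-transducers is a synchronizing invertible \((d,2)\)-transducer, so \(\psi_1\times\cdots\times\psi_d\in d\mathcal{B}_{2,1}=N_{H(\mathfrak{C}_2^d)}(dV)\) with core \(\prod_i\C(M_{\psi_i})\); this is essentially the observation the paper records when embedding \(\A(dV_n)^m\) into \(\A((md)V_n)\). What your route gives up is the general-\(n\) theorem, since the signature computation is precisely what detects which tuples are realisable when \(n>2\). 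One point to tighten: you justify the key claim by saying \(\psi^{-1}g\psi\) is ``a homeomorphism with dyadic local action in each coordinate, i.e.\ an element of \(dV\)'' --- that is not a characterisation of \(dV\) (your own product \(\psi_1\times\cdots\times\psi_d\) acts coordinatewise by dyadic-type transducers yet is generally not in \(dV\)). The correct statement, via Proposition~\ref{nV_placement}, is that membership in \(dV\) is equivalent to the core of the minimal transducer being the single identity state; since cores multiply under composition, the core of \(M_{\psi^{-1}g\psi}\) is the conjugate of the trivial core of \(M_g\) by \(\C(M_\psi)\), hence trivial. With that repair the surjectivity argument is sound.
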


We view the transducers of \cite{GNS2000} as a category in their own right, and then identify subcategories of transducers which are more appropriate for representing homeomorphisms of ``\(n\)-dimensional" Cantor Spaces. Similarly to \cite{bleak2016}, we then employ Rubin's Theorem \cite{Rubin} to represent the elements of \(\A(dV)\) as homeomorphisms, which in turn are represented with transducers. We also extend the description of \(\Out(V)\) given in \cite{bleak2016} to \(\Out(dV)\).

From this perspective we are able to represent the automorphisms of the encompassing family of groups \(dV_n\), first introduced in the paper \cite{MMN2016cohomological} of Mart\'inez-P\'erez, Matucci, and Nucinkis. We also describe the outer automorphisms of these groups with transducers and give the following theorem extending the one given for \(dV\):

\begin{theorem}\label{2ndtheorem}
For all \(d \geq 1\) and \(n\geq 2\) we have
\[\Out(dV_n) \cong \makeset{\mathbf{T}\in \Out(G_{n, n-1})^d}{\( \prod_{i<d} (\mathbf{T}\boldsymbol{\pi}_i)\overline{\text{sig}} = 1\)}\rtimes  S_d,\]
where the action of \(S_d\) is the standard permutation of coordinates, \(\boldsymbol{\pi}_i\) is the \(i^{th}\) projection map and \(\overline{\text{sig}}\) is the homomorphism of \cite{olukoya2019automorphisms} Definition 7.6.
\end{theorem}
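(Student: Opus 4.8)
The plan is to read off the claimed structure from the transducer description of $\AdVn$ established above, together with Rubin's theorem, isolating first the coordinate-permutation part and then the constrained product. Via Rubin's theorem every element of $\AdVn$ is realised by a homeomorphism of the product Cantor space $\mathcal{C}^d$ normalising $dV_n$, and the transducer representation records how such a homeomorphism acts on the $d$ coordinate directions. The first structural observation I would make is that each automorphism induces a well-defined permutation of the $d$ dimensions, giving a homomorphism $\AdVn \to S_d$. I would show this map is surjective by exhibiting, for each transposition, an explicit coordinate-swapping homeomorphism (which visibly normalises $dV_n$), and that it splits, the image of the section being the ``rigid'' coordinate permutations. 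This section descends to $\OdVn$ and supplies the outer semidirect factor $S_d$.

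Next I would analyse the kernel $K$ of the dimension-permutation map, namely the automorphisms fixing each coordinate direction setwise. Using the transducer calculus, I would argue that such an automorphism is determined by its restrictions to the $d$ directions, and that each restriction is the transducer of an automorphism of the relevant one-dimensional Higman--Thompson group $G_{n,n-1}$ arising from the framework developed above. This yields an embedding of $K$ into $\A(G_{n,n-1})^d$, and after quotienting by the inner automorphisms of $dV_n$, an embedding of the image of $K$ in $\OdVn$ into $\Out(G_{n,n-1})^d$. Matching inner automorphisms across the product is the delicate book-keeping step, but it is forced by the fact that the inner automorphisms are precisely conjugations by elements of $dV_n$ itself.

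The crux is to pin down the image exactly, and this is where the signature constraint enters. The map $\overline{\text{sig}}$ of \cite{olukoya2019automorphisms} Definition 7.6 is an abelianisation-type invariant of $\Out(G_{n,n-1})$ detecting the ``parity'' of a one-dimensional automorphism. I would show that a tuple $\mathbf{T}\in\Out(G_{n,n-1})^d$ is realised by an automorphism of $dV_n$ if and only if its local signatures are globally consistent, that is $\prod_{i<d}(\mathbf{T}\boldsymbol{\pi}_i)\overline{\text{sig}} = 1$. The forward direction is a local-to-global computation: the transducer of a genuine automorphism of $dV_n$ must respect the way the $d$ directions are woven together through the product structure, and accumulating the signature contribution along each direction shows the total must be trivial. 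For the converse I would give an explicit construction, gluing one-dimensional transducers realising the prescribed outer automorphisms and correcting by an inner automorphism to absorb any discrepancy, which the balancing condition guarantees is achievable.

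I expect the main obstacle to be exactly this last identification: showing both that the signature obstruction is the \emph{only} obstruction and that it is captured precisely by $\overline{\text{sig}}$ rather than by some finer invariant. This demands a careful compatibility analysis of the transducers across coordinates, together with the verification that the constraint set is $S_d$-invariant (which holds because the target of $\overline{\text{sig}}$ is abelian and $S_d$ merely permutes the factors), so that the semidirect product is well-defined. Assembling these pieces presents $\OdVn$ as the extension of $S_d$ by the signature-balanced subgroup of $\Out(G_{n,n-1})^d$, which is exactly the claimed group; specialising to $n=2$, where $\overline{\text{sig}}$ is trivial, recovers Theorem~\ref{1sttheorem} as a consistency check.
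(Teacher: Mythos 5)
Your plan reproduces the paper's argument essentially step for step: the dimension-permutation homomorphism onto $S_d$, split by rigid coordinate permutations (Theorem~\ref{first_semidirect}); the decomposition of its kernel into a product of one-dimensional transducers lying in $\mathcal{O}_{n,n-1}\cong\Out(G_{n,n-1})$ (Theorem~\ref{product_decomposition}); and the identification of the image as the kernel of the product signature map, with a cone-counting argument for necessity and an explicit gluing construction via a complete prefix code of the right size for sufficiency (Lemma~\ref{lemma2}). The one point you treat as an ``observation'' --- that an automorphism induces a well-defined permutation of the $d$ coordinate directions --- is in fact the most delicate step, requiring the injectivity argument of Lemma~\ref{notinjlemma}, but your overall route is the same as the paper's.
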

The outer automorphisms of Thompson groups have a history in the literature. In \cite{Brin_AutF, Brin_1998} Brin and Fernando Guzm\'an study the automorphisms of \(F\) and \(T\) type groups. As previously motioned, the authors of \cite{bleak2016} gave a means of describing \(\Out(G_{n, r})\) with transducers, in particular the way the groups \(\Out(1V_n)\) are viewed in this paper is theirs. More recently Feyishayo Olukoya has used transducer based methods to study the outer automorphisms of the groups \(T_{n, r}\) in \cite{OlukoyaAutTnr}. 

The family of groups \(dV\) has also been extensively studied in the literature. In \cite{Brin2004} it is proved that the groups \(dV\) are all infinite, simple, and finitely generated. In \cite{Brin2005} Brin goes on to give an explicit finite presentation for \(2V\) with 8 generators and 70 relations. The paper \cite{Bleak2010} of Collin Bleak and Daniel Lanoue uses Rubin's theorem to show that \(dV\) and \(nV\) are non-isomorphic for \(d\neq n\).

In \cite{hennig2011}, Johanna Hennig and Francesco Matucci show that in general \(dV\) can be finitely presented with \(2d+ 4\) generators and \(10d^2+10d+10\). More recently, Martyn Quick \cite{quick2019} has built much smaller presentations for \(dV\), using only 2 generators as well as \(2d^2 + 3d + 13\) relations.

It is shown in \cite{bleak2016} that \(\A(G_{n, r})\) embeds in the rational group \(\mathcal{R}\) of finite transducers as defined in \cite{GNS2000}.  
In \cite{Belk2016} it is shown that there is a natural topological conjugacy embedding \(2V\) into \(\mathcal{R}\) as well (which can be naturally generalised to \(dV\)). It is therefore natural to ask if this conjugacy sends \(\A(dV)\) to a subgroup of \(\mathcal{R}\). In this paper we give examples to demonstrate that this fails for all \(d\geq 2\) (see Section~\ref{ref}).

Mark V Lawson and Alina Vdovina have constructed many additional Thompson-like groups using the notion of ``\(k\)-monoids" (see \cite{lawson2019higher}). Proposition 3.9 of \cite{lawson2019higher} suggests that the methods of this document are likely only compatible with groups corresponding to the k-monoids which are finite products of finite rank free monoids.

\begin{center}
    
{Acknowledgements}

I would like to thank my supervisor Collin Bleak for reading a draft of this paper, and giving a lot of helpful advice as to how to better present the results within it. I would also like to thank Jim Belk for recommending that I look into the automorphisms of \(2V\).
\end{center}

\section{Preliminaries}
We will compose functions from left to right and we will always index from \(0\). For \(n\in \N\), we will use the notations \(X_n\) and \(\overline{n}\) to denote the set \(\{0, 1, \ldots, n-1\}\). We use the former when thinking of this set as an alphabet, and the latter when thinking of an initial segment of \(\N\). 

We denote the free monoid of all finite words over a finite alphabet \(X\) by \(X^*\). That is
\[X^* := \bigcup_{n \in \mathbb{N}}  X^{n}.\]
This notably includes the empty word which we denote by \(\varepsilon\).
We also follow the standard convention of identifying a letter with a word of length 1. If \(w\in X^*\), then we define \(|w|\) to be the length of \(w\) as a word (which is actually the same as it's cardinality).
If \(X\) is an alphabet, then the set \(X^* \cup X^\omega\) is naturally partially ordered by:
\[x\leq y\text{ if and only if }x\text{ is a ``prefix" of }y.\]
We also extend this partial order to the sets \((X^* \cup X^\omega)^d\) via the product order.

\begin{defn}
We will denote all projection morphisms by \(\boldsymbol{\pi}_0, \boldsymbol{\pi}_1, \ldots\) in all categories with products (the specific morphism used will be determined by the context).
\end{defn}

If \(x\in (X_n^*)^d\), \(y\in (X_n^*\cup X_n^\omega)^d\) and \(x\leq y\), then we define \(y-x\) to be the unique \(z\in (X_n^*\cup X_n^\omega)^d\) such that \(xz=y\) (using coordinate-wise concatenation).

\begin{defn}
If \(n\geq 2\), then we define \(\mathfrak{C}_n:= X_n^\omega\) to be the usual Cantor space with the product topology. Moreover if \(w\in (X_n^*)^d\) then we define \[w\mathfrak{C}_n^d:= \makeset{x\in  \mathfrak{C}_n^d}{\( w\leq x\)}.\]
Note that these sets are clopen, and the collection of all such sets is a basis for \(\mathfrak{C}_n^d\). Such basic open sets will be referred to as \textit{cones}.
\end{defn}
\begin{defn}
If \(X\) is a topological space, then we denote the homeomorphism group of \(X\) by \(H(X)\).
\end{defn}
We can now give the definition of the groups \(dV_n\) which will we will use throughout the paper.
\begin{defn}
Suppose that \(F_1, F_2\) are finite subsets of \((X_n^*)^d\), such that
\[\makeset{w\mathfrak{C}_n^d}{\(w\in F_1\)} \quad  \text{ and } \quad \makeset{w\mathfrak{C}_n^d}{\(w\in F_2\)}\]
are partitions of \(\mathfrak{C}_n^d\), and \(\phi: F_1 \to F_2\) is a bijection. We call such sets \(F_1, F_2\) \textit{complete prefix codes} for \(\mathfrak{C}_n^d\). We then define the \textit{prefix exchange map} \(f_\phi:~\mathfrak{C}_n^d~\to~\mathfrak{C}_n^d\) by:

If \(w\in F_1\) and \(x\in w\mathfrak{C}_n^d\), then \[(x)f_{\phi} =
(w\phi)(x - w).\]

Such prefix exchange maps are always homeomorphisms and the set of all such maps under composition forms the group \(dV_n\) (or just \(dV\) if \(n=2\)).
\end{defn}
\begin{remark}\label{anti-chain sizes}
There is a complete prefix code for \(\mathfrak{C}_n^d\) of size \(m\) if and only if \(m\in  (1 + (n-1)\mathbb{N})\).
\end{remark}

\section{Generalizing the transducers of Grigorchuk,  Nekrashevich, and Sushchanskii}
A transducer, as introduced by Grigorchuk,  Nekrashevich, and Sushchanskii (which we shorten to GNS) in \cite{GNS2000}, can be thought of as a way of assigning each letter of an alphabet, a transformation of a ``state set", together with a word to write for each state. These are then extended to all words in the input alphabet via the universal property of the free monoid. With reading elements of a monoid in mind, the following is a natural generalisation of their transducer definition.
\begin{defn}
We say that \(T := (Q_T, D_T, R_T, \pi_T, \lambda_T)\) is a transducer if:
\begin{enumerate}
    \item \(Q_T\) is a set (called the set of states),
    \item \(D_T\) is a semigroup (called the domain semigroup),
    \item \(R_T\) is a semigroup (called the range semigroup),
    \item \(\pi_T: Q_T\times D_T \to Q_T\) is a (right) action of \(D_T\) on the set \(Q_T\) (called the transition function),    \item \(\lambda_T : Q_T\times D_T \to R_T\) is a function with the property that for all \(q\in Q_T\) and \(s,t\in D_T\) we have \[( q,st)\lambda_T= (q,s)\lambda_T(( q,s)\pi_T,t)\lambda_T\text{ (called the output function).}\] 
\end{enumerate}
We will often refer to the domain semigroup and range semigroup of a transducer as simply it's domain and range.
\end{defn}
Note that a one state transducer is equivalent to a semigroup homomorphism.
\begin{defn}
Let \(A, B\) be transducers. We say that \(\phi\) is a transducer homomorphism from \(A\) to \(B\) (written \(\phi:A \to B\)), if \(\phi\) is a 3-tuple \((\phi_Q, \phi_D, \phi_R)\) with the following properties: 
\begin{enumerate}
    \item \(\phi_{R}:R_{A}\to R_{B}\) is a semigroup homomorphism,
    \item \(\phi_{D}:D_{A}\to D_{B}\) is a semigroup homomorphism,
    \item \(\phi_Q:Q_{A} \to Q_B\) is a function, such that for all \(q\in Q_{A}\) and \(s\in D_A\) we have
    \[(q, s)\pi_{A}\phi_Q = (q\phi_Q, s\phi_D)\pi_{B} \quad \text{and}\quad (q, s)\lambda_{A}\phi_R = (q\phi_Q, s\phi_D) \lambda_{B}.\]
\end{enumerate}
 If furthermore, the maps \(\phi_D, \phi_R\) are identity maps, then we say that \(\phi\) is \textit{strong}.
\end{defn}
\begin{remark}
If transducer homomorphisms are composed component-wise, then transducers become a category when together with transducer homomorphisms, or strong transducer homomorphisms.
\end{remark}

\begin{defn}
A transducer homomorphism \(\phi\), is called a \textit{quotient} map if each of \(\phi_{Q}, \phi_D, \phi_R\) is surjective.
\end{defn}

We say that transducers \(A\) and \(B\) are isomorphic (denoted \(A\cong B\)) if they are isomorphic in the category of transducers and transducer homomorphisms. Similarly, we say that \(A\) and \(B\) are strongly isomorphic (denoted \(A\cong_S B\)) if they are isomorphic in the category of transducers and strong transducer homomorphisms.

The next definition gives us a means of minimizing our transducers which coincides with the GNS notion of combining equivalent states.

\begin{defn}
If \(T\) is a transducer with \(R_T\) cancellative, then we define its \textit{minimal transducer} \(M_T\) to be \((Q_T/\sim_{M_T}, D_T, R_T, \pi_{M_T}, \lambda_{M_T})\) where \(\sim_{M_T}, \pi_{M_T}\) and \(\lambda_{M_T}\) are defined by:
\begin{enumerate}
    \item \(\sim_{M_T}\) is the equivalence relation 
    \[\makeset{(p, q)\in Q_T^2}{\((p, s)\lambda_{T} = (q, s)\lambda_{T}\) for all \(s\in D_T\)},\]
    \item if \(q\in Q_T\), \(s\in D_T\) then \(([q]_{\sim_{M_T}}, s)\pi_{M_T} = [(q,s)\pi_T]_{\sim_{M_T}}\),
    \item if \(q\in Q_T\), \(s\in D_T\) then \(([q]_{\sim_{M_T}}, s)\lambda_{M_T} = (q,s)\lambda_T\).
\end{enumerate}
It is routine to verify that this is well-defined (as \(R_T\) is cancellative) and the natural strong quotient candidate \(q_T:T\to M_T\), with \((p){q_T}_Q= [p]_{\sim_{M_T}}\) is a strong quotient map.
\end{defn}
\begin{lemma}
If \(A\) is a transducer with \(R_A\) cancellative, then all strong quotient maps \(\phi:A\to B\) are left divisors of \(q_A\).
\end{lemma}
\begin{proof}
We define \(\psi:B\to M_A\) by having \(\psi_D, \psi_R\) be the identity maps and defining \(\psi_Q\) by:
\[((q)\phi_Q)\psi_Q:= (q){q_A}_Q.\]
If we can show that \(\psi\) is a well defined transducer homomorphism then the result follows.

Note that all the maps \(\phi_D, \phi_R, \psi_D, \psi_R, {q_A}_D\) and \({q_A}_R\) are identity maps so we can ignore them for the purposes of this proof. We first show that \(\psi\) is well-defined.  Suppose that \(q_0, q_1\in Q_A\) satisfy \((q_0)\phi_Q= (q_1)\phi_Q\). We need to show that \(q_0\sim_{M_T} q_1\). Let \(s\in D_A\) be arbitrary, then
\[(q_0, s)\lambda_A = ((q_0)\phi_Q, s)\lambda_B= ((q_1)\phi_Q, s)\lambda_B= (q_1, s)\lambda_A.\]
We next show that \(\psi\) is a homomorphism. Using the fact that \(\phi_Q\) is surjective, let \((p,s)= ((q)\phi_Q,s)\in Q_B~\times~D_B\) be arbitrary. We need only verify that \(\psi\) satisfies condition 3 from the definition of a transducer homomorphism.  We have
\begin{align*}
    (p, s)\pi_B\psi_Q&=((q)\phi_Q, s)\pi_B\psi_Q&\text{ by the definition of }p\\
    &=(q, s)\pi_A\phi_Q\psi_Q& \text{ because }\phi\text{ is a homomorphism} \\
    &=((q){q_A}_Q, s)\pi_{M_A} &\text{ because }q_A\text{ is a homomorphism}\\
    &=(((q)\phi_Q)\psi_Q, s)\pi_{M_A}&\text{ by the definition of }\psi\\
    &=((p)\psi_Q, s)\pi_{M_A}&\text{ by the definition of }p,
\end{align*}
and similarly \((p, s)\lambda_B=((p)\psi_Q, s)\lambda_{M_A}\) as required.

\end{proof}
\begin{defn}
If \(T\) is a transducer, and we restrict \(Q_T, D_T\), and \(R_T\) to sets which are (together) closed under the transition and output functions, then we obtain another transducer. We call such a transducer a \textit{subtransducer} of \(T\).
\end{defn}

\begin{defn}
If \(d\in \N\) and \(n\in \N\backslash \{0, 1\}\), then we define a \((d,n)\)-transducer to be a transducer \(T\) with \((X_n^*)^d\) as its domain and range, and such that the transition function is a monoid action (so you don't transition when reading the identity).
\end{defn}
If \(T\) is a transducer, \(q\in Q_T\), and \(w\in D_T\), then will view the maps \((q, \cdot)\pi_T\), and \((q, \cdot)\lambda_T\) as reading \(w\) though a path in \(T\) from \(q\), ending at the state \((q, w)\pi_T\), and writing \((q, w)\lambda_T\) along the way (similarly to GNS transducers).
Note that unlike GNS transducers, there isn't always a ``best" way of splitting up this path into minimal steps, for example \((0,0)\in (X_2^*)^2\) could naturally be decomposed as either \((0, \varepsilon)(\varepsilon, 0)\) or  \(( \varepsilon, 0)(0, \varepsilon)\).

If \(T\) is a \((d,n)\)-transducer then (like GNS transducers) we can naturally extend this idea to ``infinite words", which in this case means elements of \((X_n^\omega)^d\), by reading arbitrarily long finite prefixes of an element and taking the limit of the elements written.

\begin{defn}
If \(T\) is a \((d, n)\)-transducer and \(q\in Q_T\), then we define \(f_{T,q}:(X_n^\omega)^d \to (X_n^\omega \cup X_n^*)^d\) to be the map which maps \(w\in (X_n^\omega)^d\) to the word written when \(w\) is read in \(T\) from the state \(q\).
\end{defn}
Note that if \(A\) is a \((d, n)\)-transducer, \(q\in Q_A\) and \(\phi:A\to B\) is a strong transducer homomorphism, then \(f_{A,q}=f_{B, (q)\phi_Q}\). In particular this is true of the homomorphism \(q_A\).
\begin{defn}
Similarly to GNS we say that a \((d, n)\)-transducer \(T\) is \textit{degenerate} if there exist \(q\in Q_T\), \(i\in \overline{d}\) and \(x\in \mathfrak{C}_n^d\) such that \((x)f_{T,q}\pi_i\) is finite.
\end{defn}
We will often use the following fact without comment.
\begin{remark}
If \(T\) is a non-degenerate \((d,n)\)-transducer and \(q\in Q_T\), then for all \(m\in \N\) there is \(k\in \N\) such that reading an element of \((X _n^k)^d\) always writes a word whose length is at least \(m\) in every coordinate.
\end{remark}

There are 2 important ways by which we combine our transducers, there is ``composition" as was done in GNS, and taking products in the categorical sense.

\begin{defn}
If \(A\) and \(B\) are transducers, such that the range of \(A\) is contained in the domain of \(B\), then we define their \textit{composite} by
\[AB= (Q_{AB}, D_{AB}, R_{AB}, \pi_{AB}, \lambda_{AB}).\]
Where
\begin{enumerate}
    \item \(Q_{AB}:= Q_A\times Q_B\), \(D_{AB}:= D_A\), \(R_{AB}:= R_B\),
    \item \(((a, b), s)\pi_{AB}= ((a,s)\pi_A, (b, (a,s)\lambda_{A})\pi_B)\),
    \item \(((a, b), s)\lambda_{A,B}= (b,(a,s)\lambda_{A})\lambda_B\).
\end{enumerate}
\end{defn}
As was the case in GNS, this definition is constructed so that whenever \(A, B\) are non-degenerate \((d, n)\)-transducers, and \((p,q)\in A\times B\), we obtain \(f_{A, p}f_{B,q}=f_{AB,(p,q)}\).
\begin{defn}
If \((A)_{i\in I}\) are transducers, then we define
\(\prod_{i\in I}A_i := P\)
where
\[Q_P:=\prod_{i\in I}Q_{A_i}, \quad D_P:= \prod_{i\in I}D_{A_i}, \quad R_P := \prod_{i\in I}R_{A_i},\]
and for all \((p_i)_{i\in I}\in Q_P\) and \((s_i)_{i\in I}\in D_P\) we have
\[((p_i)_{i\in I}, (s_i)_{i\in I})\pi_{P}=
((p_i,s_i)\pi_{A_i})_{i\in I},\]
\[((p_i)_{i\in I}, (s_i)_{i\in I})\lambda_{P}=
((p_i,s_i)\lambda_{A_i})_{i\in I}.\]
For \(i\in I\) we then define \(\boldsymbol{\pi}_i: P \to A_i\) to be the transducer homomorphism
\((\boldsymbol{\pi}_i, \boldsymbol{\pi}_i, \boldsymbol{\pi}_i)\). One can verify that this is a product in the category theoretic sense (using transducer homomorphisms but not strong transducer homomorphisms).
\end{defn}
The following definition gives us, for each homeomorphism \(h\) of \(\mathfrak{C}_n^d\), a transducer \(M_h\) representing it. From the definition, one can see that this transducer has no inaccessible states, has complete response and has no distinct but equivalent states. So in particular when \(d= 1\), the transducer \(M_h\) is the minimal transducer representing \(h\) as described by GNS. 
\begin{defn}
If \(h\in H(\mathfrak{C}_n^d)\), then we define \(T_h\) to be the \((d, n)\)-transducer with
\begin{enumerate}
    \item \(Q_{T_h}:= (X_n^*)^d\),
    \item \((s, t)\pi_{T_h}= st\),
    \item \(((s, t)\lambda_{T_h})\boldsymbol{\pi}_i\) is \(b-a\), where \(b\) is the longest common prefix of the words in the set \(((st\mathfrak{C}_n^d)h)\boldsymbol{\pi}_i\) and \(a\) is the longest common prefix of the words in the set \(((s\mathfrak{C}_n^d)h)\boldsymbol{\pi}_i\).
\end{enumerate}
(As \(h\) is a homeomorphism, the set \((st\mathfrak{C}_n^d)h\) is always open and thus \((s, t)\lambda_{T_h}\) is always an element of \((X_n^*)^d\).)
Moreover, as was the case in GNS, if \(q=1_{(X_n^*)^d}\) then \(f_{T_h, q}= h\). We also define \(M_h:= M_{T_h}\).
\end{defn}
\begin{remark}\label{inj_clo}
If \(h\in H(\mathfrak{C}_n^d)\) and \(q\in Q_{M_h}\), then \(f_{M_h, q}\) is injective with clopen image.
\end{remark}
The proof of the following theorem is analogous to the proof of the analogous theorem in GNS, (the above construction deals with the homeomorphism case).
\begin{theorem}\label{epic}
A function \(h: \mathfrak{C}_n^d \to \mathfrak{C}_n^d\) is continuous if and only if there is a non-degenerate \((d, n)\)-transducer \(T\) and \(q\in Q_T\) such that \(h= f_{T,q}\).
\end{theorem}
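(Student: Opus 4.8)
The plan is to prove the two implications separately, dispatching the direction ``transducer \(\Rightarrow\) continuous'' quickly and building the transducer by hand for the converse, exactly as in GNS. For the easy direction, suppose \(h=f_{T,q}\) for a non-degenerate \((d,n)\)-transducer \(T\) and \(q\in Q_T\). I would read off continuity---indeed uniform continuity---directly from the uniform output-growth recorded in the Remark preceding this theorem. Given \(m\in\N\), pick \(k\in\N\) so that reading any element of \((X_n^k)^d\) from \(q\) writes a word of length at least \(m\) in every coordinate. If \(x,y\in(X_n^\omega)^d\) share their length-\(k\) prefix \(u\in(X_n^k)^d\), then reading \(u\) from \(q\) produces the same output \((q,u)\lambda_T\) and lands in the same state \((q,u)\pi_T\); the cocycle identity for \(\lambda_T\) then exhibits \((q,u)\lambda_T\) as a common prefix, in each coordinate, of both \(h(x)\) and \(h(y)\), and this prefix has length at least \(m\) in every coordinate. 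Thus \(h(x)\) and \(h(y)\) agree to depth \(m\) whenever \(x,y\) agree to depth \(k\), and as \(m\) is arbitrary \(h\) is continuous.

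For the converse I would imitate the construction of \(T_h\), which already solves the problem when \(h\) is a homeomorphism. Given a continuous \(f\), take \(Q=(X_n^*)^d\) with transition \((s,t)\pi=st\), and for \(s\in(X_n^*)^d\) write \(c(s)_i\) for the longest common prefix of \(\boldsymbol{\pi}_i(f(s\mathfrak{C}_n^d))\); set the \(i\)th coordinate of \((s,t)\lambda\) to be \(c(st)_i-c(s)_i\). Since \(st\mathfrak{C}_n^d\subseteq s\mathfrak{C}_n^d\) forces \(c(s)\leq c(st)\), this difference is defined, and the cocycle identity for \(\lambda\) follows by telescoping \(c(stu)-c(s)=(c(st)-c(s))(c(stu)-c(st))\). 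Continuity forces the image cones \(f(u\mathfrak{C}_n^d)\) to shrink onto \(f(x)\) as \(u\leq x\) grows, so reading \(x\) from the state \(\varepsilon\) outputs \(f(x)\) with the globally common prefix \(c(\varepsilon)\) stripped off; when \(c(\varepsilon)\) is finite this prefix is restored by adjoining a single initial state that front-loads it onto the first non-empty read (one checks this respects the monoid action, even though reading \(\varepsilon\) must output \(\varepsilon\)). The resulting transducer computes \(f\) from that state, and is automatically non-degenerate, since every residual map is again a continuous map into \((X_n^\omega)^d\) and hence no infinite input can yield a finite output.

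The real obstacle---and the only place the homeomorphism hypothesis was genuinely used in the definition of \(T_h\)---is that \(c(s)_i\) need not be finite: if \(f\) collapses the cone \(s\mathfrak{C}_n^d\) so that \(\boldsymbol{\pi}_i\circ f\) is constant on it (the extreme case being \(f\) constant), then \(c(s)_i\in X_n^\omega\) and \((s,t)\lambda\) escapes \((X_n^*)^d\). I expect this to be the heart of the argument. The fix, exactly parallel to GNS, is to stream such a frozen coordinate out one letter at a time: once \(c(s)_i\) first becomes an infinite word \(w\), I would replace the offending state by a chain of states indexed by how much of \(w\) has already been emitted, writing at least one further letter of \(w\) in coordinate \(i\) for each input letter read (this simultaneously absorbs a nonempty \(c(\varepsilon)\) in the case that it too is infinite). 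This keeps every output inside \((X_n^*)^d\), keeps \(\pi\) a monoid action and \(\lambda\) a cocycle, still computes \(f\), and preserves non-degeneracy. Verifying that these streaming chains splice compatibly onto the unmodified states and that the global map is unchanged is routine bookkeeping of the same flavour as the corresponding step in GNS.
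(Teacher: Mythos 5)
Your proposal is correct and follows the same route as the paper, which itself only gestures at the proof by deferring to the analogous GNS argument: the easy direction via the uniform output-growth remark, and the converse via the \(T_h\)-style construction with longest-common-prefix outputs, patched (as in GNS) by streaming out any coordinate whose common prefix becomes infinite. You have in effect supplied the details the paper omits; the only cosmetic quibble is that since your state set is already \((X_n^*)^d\), the streaming fix is more naturally phrased as truncating \(c(s)\) to a finite prefix whose length grows with \(|s|\), rather than adjoining new chains of states.
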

\section{Generalizing the synchronizing homeomorphisms of Bleak, Cameron, Maissel, Navas, and Olukoya}
As was the case in \cite{bleak2016} when analyzing \(\A(G_{n, r})\), we now want to restrict to the transducers which give us the automorphisms we want. We thus extend the notion of synchronization given there.
\begin{defn}
We say that a \((d, n)\)-transducer \(T\) is \textit{synchonizing at level }\(k\) if for all \(q_1,q_2\in Q_T\) and \(w\in (X_n^*)^d\) with \(\min(\makeset{|w\boldsymbol{\pi}_i|}{\(i\in \overline{d}\)})\geq k\), we have \((q_1, w)\pi_T = (q_2, w)\pi_T\).
We say that \(T\) is \textit{synchronizing} if there is a level at which it is synchronizing. 
The \textit{synchronizing length} of a synchronizing transducer \(T\) is \[\min(\makeset{k\in \N}{\(T\) is synchronizing at level \(k\)}).\]
In this case we define the function
\[\mathfrak{s}_T:= \makeset{(w, q)\in (X_n^*)^d\times Q_T}{for all \(p\in Q_T\) we have \((w,p)\pi_T=q\)}.\]
So \(\mathfrak{s}_T\) is basically \(\pi_T\) restricted to the part of it's domain where the input state is not needed. The image of \(\mathfrak{s}_T\), denoted \(\C(T)\), is called the \textit{core} of \(T\).
\end{defn}
It is useful to think of the core of a synchronising \((d,n)\)-transducer as the place reached when a sufficient amount of information has been read in each coordinate.
In particular, if a word is read from any core state of a synchronizing \((d, n)\)-transducer \(T\) then you stay in the core, thus \(\C(T)\) is a (synchronizing) subtransducer of \(T\) (when given the restrictions of the transition and output functions of \(T\)).

As \(\C(T)= ((X_n^k)^d)\mathfrak{s}_T\) (where \(k\) is the synchronizing length of \(T\)), it follows that \(\C(T)\) is always finite.

The following proposition is routine to verify, and shows that our transducer framework describes \(dV_n\) in a manner analogous to the way in which the transducers of GNS describe \(V_n\).
\begin{proposition}\label{nV_placement}
If \(h\in H(\mathfrak{C}_n^d)\), then \(h\in dV_n\) if and only if \(M_h\) is a synchronizing transducer whose core consists of a single ``identity" state.
\end{proposition}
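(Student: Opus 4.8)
The plan is to prove both implications by using the complete prefix code \((X_n^k)^d\) as a bridge between the prefix‑exchange description of an element of \(dV_n\) and the synchronizing data of \(M_h\). Throughout, by an \emph{identity state} I mean a state \(q\) with \(f_{M_h,q}=\mathrm{id}\), equivalently one that copies its input verbatim; and \(q_0\) denotes the initial state of \(M_h\), i.e.\ the class with \(f_{M_h,q_0}=h\).

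For the forward implication, suppose \(h=f_\phi\in dV_n\) for a bijection \(\phi\colon F_1\to F_2\) of complete prefix codes, and set \(k:=\max\{\,|w\boldsymbol{\pi}_i| : w\in F_1,\ i\in\overline{n}\,\}\). First I would observe that any \(v\in(X_n^*)^d\) with \(\min_i|v\boldsymbol{\pi}_i|\geq k\) lies coordinatewise below a unique \(w\in F_1\), so that \(v\mathfrak{C}_n^d\subseteq w\mathfrak{C}_n^d\); writing \(u:=v-w\), the definition of \(f_\phi\) gives \(h(vx)=(w\phi)\,u\,x\) for every \(x\in\mathfrak{C}_n^d\). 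Hence \(h\) carries the cone \(v\mathfrak{C}_n^d\) onto \((w\phi)u\,\mathfrak{C}_n^d\) by merely copying tails, so the map read from the state \(v\) of \(T_h\) is the identity, i.e.\ \(f_{T_h,v}=\mathrm{id}\). Consequently all such states collapse to a single class \(q^\ast\) in \(M_h\) with \(f_{M_h,q^\ast}=\mathrm{id}\). Since prepending any prefix only increases coordinate lengths, reading any \(u\) with \(\min_i|u\boldsymbol{\pi}_i|\geq k\) from an arbitrary state \([f_{T_h,s}]\) of \(M_h\) produces \([f_{T_h,su}]=q^\ast\); this is exactly synchronization at level \(k\), with \(\C(M_h)=\{q^\ast\}\) a single identity state.

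For the reverse implication, suppose \(M_h\) is synchronizing, say at level \(k\), with \(\C(M_h)=\{q^\ast\}\) and \(f_{M_h,q^\ast}=\mathrm{id}\). I would take \(F_1:=(X_n^k)^d\), which is a complete prefix code since the length-\(k\) words partition \(X_n^\omega\) in each coordinate. For each \(v\in F_1\), reading \(v\) from \(q_0\) reaches \(q^\ast\) by synchronization and writes a finite word \(w_v:=(q_0,v)\lambda_{M_h}\); because \(q^\ast\) is an identity state, the cocycle property of the output function yields \(h(vx)=w_v\,x\) for all \(x\), so \(h(v\mathfrak{C}_n^d)=w_v\mathfrak{C}_n^d\). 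As the cones \(\{v\mathfrak{C}_n^d : v\in F_1\}\) partition \(\mathfrak{C}_n^d\) and \(h\) is a bijection, their images \(\{w_v\mathfrak{C}_n^d : v\in F_1\}\) also partition \(\mathfrak{C}_n^d\); thus \(F_2:=\{w_v : v\in F_1\}\) is a complete prefix code and \(v\mapsto w_v\) is a bijection \(F_1\to F_2\). The associated prefix‑exchange map is exactly \(h\), whence \(h\in dV_n\).

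The bookkeeping is light, and I expect the only genuinely delicate point to be matching the longest‑common‑prefix definition of the output function of \(T_h\) with the concrete tail‑copying description of \(f_\phi\): specifically, checking cleanly that \((q_0,v)\lambda_{M_h}=w_v\) with no extra shared prefix and that \(f_{T_h,v}=\mathrm{id}\) for deep \(v\). The companion subtlety is, in the reverse direction, the passage from the abstract synchronizing condition to an honest complete prefix code, where the fact that \(h\) is a homeomorphism — hence a bijection carrying the clopen partition \(\{v\mathfrak{C}_n^d\}\) to a clopen partition — is precisely what guarantees that \(F_2\) is complete and that \(\phi\) is a bijection.
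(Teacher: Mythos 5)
Your argument is correct, and it is precisely the ``routine'' verification that the paper omits (the paper states this proposition without proof): the forward direction via the observation that any state $v$ of $T_h$ deeper than the maximal coordinate length of $F_1$ satisfies $(v,t)\lambda_{T_h}=t$ and hence all such states collapse to one identity class, and the reverse direction via the complete prefix code $(X_n^k)^d$ and the cocycle identity $(vx)h=w_v x$. The only cosmetic slip is writing $i\in\overline{n}$ where the coordinates should range over $\overline{d}$ (a typo the paper's own definition of synchronization also contains), and it would be worth one explicit line confirming that distinct $v$ give distinct $w_v$ because the image cones are pairwise disjoint; otherwise the bookkeeping is complete.
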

Unlike for \(V_n\), the transducers for elements of \(dV_n\) can sometimes be infinite. For example the transducer representing the baker's map of 2V is infinite (as can be seen in Figure~\ref{baker}) as it can't write anything until something is read in the first coordinate.
\begin{figure}
\begin{center}
\begin{tikzpicture}[->,>=stealth',shorten >=1pt,auto,node distance=5cm,on grid,semithick,
                    every state/.style={fill=red,draw=none,circular drop shadow,text=white}]  
  \node [state] (A)                {Start};
  \node [state] (B)  [right= of A] {};
  \node [state] (C)  [below= of B] {Core};
  \node [state] (D)  [below= of A] {};
  \node [] (E) [below= of D] {$\vdots$};
  \node [] (G) [right= of E] {$\ddots$};
  \node [] (F) [right= of B] {$\ldots$};
  \node [] (H) [below= of F] {$\ddots$};
 \path [->]
 (A) edge [out=305,in= 145] node [swap]
 {$(1, \varepsilon)/(\varepsilon, 1)$} (C)
 (A) edge [out=325,in=125] node [swap,xshift = 17pt,yshift = 50pt]
 {$(0, \varepsilon)/(\varepsilon, 0)$} (C)
 (A) edge [out=0,in=180] node [swap, yshift= 17pt]
 {$(\varepsilon,1)/(\varepsilon,\varepsilon)$} (B)
 (A) edge [out=270,in=90] node [swap]
 {$(\varepsilon,0)/(\varepsilon,\varepsilon)$} (D)
 
 (B) edge [out=280,in=85] node [swap, xshift=63pt, yshift = -20pt]
 {$(1, \varepsilon)/(\varepsilon, 11)$} (C)
 (B) edge [out=260, in= 95] node [swap, yshift= 20pt]
 {$(0, \varepsilon)/(\varepsilon, 01)$} (C)
 (B) edge [out=00,in=180] node [swap, yshift=17pt]
 {$(\varepsilon,1)/( \varepsilon,\varepsilon)$} (F)
 (B) edge [out=315,in=135] node []
 {$(\varepsilon,0)/( \varepsilon,\varepsilon)$} (H)
 
 (C) edge  [out=310,in=335, loop]
 node  [swap]{$(1, \varepsilon)/(1, \varepsilon)$} (C)
 (C) edge  [out=350,in=15, loop]
 node [swap]{$(0, \varepsilon)/(0, \varepsilon)$} (C)
 (C) edge [out=220,in=245, loop] node [swap]
 {$(\varepsilon,1)/( \varepsilon,1)$} (C)
 (C) edge [out=270,in=295, loop] node [swap]
 {$(\varepsilon,0)/( \varepsilon,0)$} (C)
 
 (D) edge [out=10,in=170] node [swap, yshift=17pt]
 {$(1, \varepsilon)/(\varepsilon, 10)$} (C)
 (D) edge [out=350,in=190] node [swap]
 {$(0, \varepsilon)/(\varepsilon, 00)$} (C)
 (D) edge [out=315,in=135] node [swap, xshift=70pt]
 {$(\varepsilon,1)/(\varepsilon, \varepsilon)$} (G)
 (D) edge [out=270, in= 90] node [swap]
 {$(\varepsilon,0)/(\varepsilon, \varepsilon)$} (E);
 \end{tikzpicture}
 \end{center}
\caption{Part of a minimal transducer with \((X_2^*)^2\) as domain and range. This represents the baker's map in \(2V\) (this transducer is infinite and every state has an edge with the core as it's target).}\label{baker}
\end{figure}
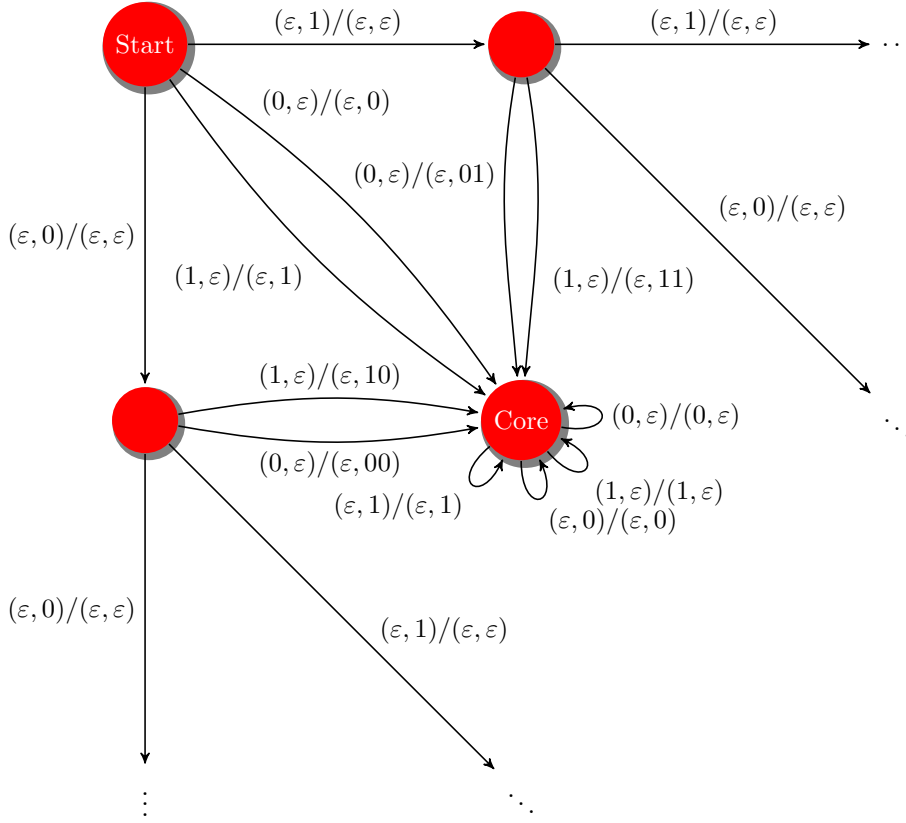

We will now introduce the monoids \(d\mathcal{S}_{n,1}\), \(\widetilde{d\mathcal{O}_{n,1}}\), \(d\mathcal{B}_{n, 1}\) and \(d\mathcal{O}_{n,1}\) which generalise the monoids \(\mathcal{S}_{n,1}\), \(\widetilde{\mathcal{O}_{n,1}}\), \(\mathcal{B}_{n, 1}\) and \(\mathcal{O}_{n,1}\) of \cite{bleak2016}.
\begin{defn}
We say an element \(f\in H(\mathfrak{C}_n^d)\) is \textit{synchronizing} if \(M_f\) is synchronizing. We define \(d\mathcal{S}_{n,1}\) to be the set of synchronizing elements of \(H(\mathfrak{C}_n^d)\).
\end{defn}

\begin{remark}\label{exp_remark}
If \(A, B\) are synchronizing, non-degenerate transducers then so is their composite \(AB\). This works as you can synchronize the first coordinate using the synchronizing property of \(A\), and once the first coordinate is in the finite non-degenerate core of \(A\), one can read enough so that the output of \(A\) synchronizes \(B\) as well.
\end{remark}
\begin{corollary}
If \(f, g\in d\mathcal{S}_{n, 1}\) then 
\(\C(M_fM_g)\) is a subtransducer of the composite transducer \(\C(M_f)\C(M_g).\)
\end{corollary}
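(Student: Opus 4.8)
The plan is to show two things: that the state set of \(\C(M_fM_g)\) is contained in \(\C(M_f)\times\C(M_g)\), which is exactly the state set of the composite \(\C(M_f)\C(M_g)\), and that on these states the transition and output functions of the two transducers coincide. Together these give precisely the assertion that \(\C(M_fM_g)\) is a subtransducer of \(\C(M_f)\C(M_g)\).

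First I would set up the objects. Since \(f,g\) are homeomorphisms, \(M_f\) and \(M_g\) are non-degenerate (by Remark~\ref{inj_clo} each \(f_{M_h,q}\) has clopen image, so it writes genuinely infinite words), and they are synchronizing because \(f,g\in d\mathcal{S}_{n,1}\). Hence Remark~\ref{exp_remark} applies and \(M_fM_g\) is itself synchronizing and non-degenerate, so \(\C(M_fM_g)\) is defined; the composite \(\C(M_f)\C(M_g)\) is likewise defined, since both cores have domain and range \((X_n^*)^d\).

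The heart of the argument is the containment of state sets. A state of \(M_fM_g\) is a pair \((p,q)\in Q_{M_f}\times Q_{M_g}\), and by the definition of the composite,
\[((p,q),w)\pi_{M_fM_g}=\bigl((p,w)\pi_{M_f},\;(q,(p,w)\lambda_{M_f})\pi_{M_g}\bigr).\]
A state of \(\C(M_fM_g)\) has the form \(((p,q),w)\pi_{M_fM_g}\) for \(w\) long enough in every coordinate. For the first coordinate, once the minimal coordinate length of \(w\) exceeds the synchronizing length of \(M_f\), the state \((p,w)\pi_{M_f}\) lies in \(\C(M_f)\). For the second coordinate I would invoke non-degeneracy of \(M_f\): reading a sufficiently long word writes, in every coordinate, a word at least as long as the synchronizing length of \(M_g\), so that \((p,w)\lambda_{M_f}\) drives \(M_g\) into its core and \((q,(p,w)\lambda_{M_f})\pi_{M_g}\in\C(M_g)\). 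Choosing \(w\) long enough in every coordinate to satisfy both conditions (and to be past the synchronizing length of \(M_fM_g\)), every core state of \(M_fM_g\) lies in \(\C(M_f)\times\C(M_g)\).

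Finally I would check that the functions match. Because the cores \(\C(M_f)\) and \(\C(M_g)\) are subtransducers, they are closed under the transition and output functions, which on them are just the restrictions of those of \(M_f\) and \(M_g\); consequently the composite formulas defining \(\pi_{\C(M_f)\C(M_g)}\) and \(\lambda_{\C(M_f)\C(M_g)}\) agree, on \(\C(M_f)\times\C(M_g)\), with the formulas defining \(\pi_{M_fM_g}\) and \(\lambda_{M_fM_g}\), and hence with those of the subtransducer \(\C(M_fM_g)\). Closure of \(\C(M_fM_g)\) under these functions is inherited from its being the core of \(M_fM_g\). I expect the only delicate point to be the second-coordinate step: although the word \((p,w)\lambda_{M_f}\) written by \(M_f\) still depends on the starting state \(p\), what matters is only that it is long in every coordinate, which non-degeneracy of \(M_f\) guarantees uniformly, and the synchronizing property of \(M_fM_g\) (already supplied by Remark~\ref{exp_remark}) then forces the resulting core state to be independent of the starting state.
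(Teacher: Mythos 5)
Your proposal is correct and follows essentially the same route as the paper, which leaves this corollary as an immediate consequence of Remark~\ref{exp_remark}: synchronize the first coordinate into \(\C(M_f)\), use non-degeneracy of \(M_f\) so that its output synchronizes \(M_g\) into \(\C(M_g)\), and observe that the transition and output functions of the composite restrict correctly. Your write-up merely makes explicit the details the paper treats as routine, so no further comparison is needed.
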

\begin{corollary}\label{sync_monoid}
The set \(d\mathcal{S}_{n, 1}\) is always a monoid.
\end{corollary}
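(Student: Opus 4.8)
The plan is to verify the monoid axioms for \(d\mathcal{S}_{n,1}\) under composition, with essentially all the content concentrated in closure. Associativity is immediate, since \(d\mathcal{S}_{n,1}\subseteq H(\mathfrak{C}_n^d)\) and composition of maps is associative. For the identity element, note that \(\mathrm{id}\in dV_n\), so by Proposition~\ref{nV_placement} its minimal transducer is synchronizing and hence \(\mathrm{id}\in d\mathcal{S}_{n,1}\); alternatively \(M_{\mathrm{id}}\) is a single identity state and is trivially synchronizing at level \(0\).

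For closure, fix \(f,g\in d\mathcal{S}_{n,1}\). First I would record that \(M_f\) and \(M_g\) are non-degenerate: by Remark~\ref{inj_clo} each map \(f_{M_f,q}\) (resp.\ \(f_{M_g,q}\)) is injective with clopen image inside \((X_n^\omega)^d\), so no finite words occur in any output. They are synchronizing by the definition of \(d\mathcal{S}_{n,1}\). Remark~\ref{exp_remark} then applies directly to give that the composite \(M_fM_g\) is again a synchronizing, non-degenerate \((d,n)\)-transducer, say synchronizing at level \(k\). Writing \(p_0,r_0\) for the identity states of \(M_f,M_g\) (so that \(f_{M_f,p_0}=f\) and \(f_{M_g,r_0}=g\)), the composite property recorded after the definition of \(M_fM_g\) gives \(f_{M_fM_g,(p_0,r_0)}=fg\).

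It remains to transfer synchronization from \(M_fM_g\) to the minimal transducer \(M_{fg}\). I would do this with two observations, each an immediate consequence of the fact that the synchronizing condition refers only to the transition function \(\pi\). First, the subtransducer \(T'\) of \(M_fM_g\) accessible from \((p_0,r_0)\) is synchronizing at level \(k\): reading any word of sufficiently large minimal coordinate length from two states of \(T'\) lands them at a common state, which lies in \(T'\) by closure under transitions. Second, synchronization is preserved by strong quotient maps: if \(\phi\colon T'\to S\) is a strong quotient and \(T'\) is synchronizing at level \(k\), then for \(\bar q_1,\bar q_2\in Q_S\) lifted to \(q_1,q_2\in Q_{T'}\) and any long word \(w\), the identity \((q_i,w)\pi_{T'}\phi_Q=(\bar q_i,w)\pi_S\) (using \(\phi_D=\mathrm{id}\)) forces \((\bar q_1,w)\pi_S=(\bar q_2,w)\pi_S\). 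Since \(T'\) is accessible, non-degenerate, and represents \(fg\) at \((p_0,r_0)\), its minimization is strongly isomorphic to \(M_{fg}\); applying the two observations shows \(M_{fg}\) is synchronizing at level \(k\), so \(fg\in d\mathcal{S}_{n,1}\).

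The hard part will be the final identification of the minimization of \(T'\) with \(M_{fg}\). This is the GNS-style uniqueness of the minimal transducer representing a fixed continuous map: the three properties of \(M_h\) noted after its definition (no inaccessible states, complete response, no distinct equivalent states) pin it down. I would establish it through the universal property of the canonical strong quotient \(q_{T'}\) (all strong quotients with domain \(T'\) are its left divisors, using that \(R_{T'}=(X_n^*)^d\) is cancellative) together with non-degeneracy, which guarantees the output function separates states exactly as in the \(d=1\) GNS setting. A useful simplification is that, because synchronization is insensitive to the output function, I do not actually need the full strong isomorphism: it is enough that the minimization of \(T'\) has the same transition structure as \(M_{fg}\), so even the subtle complete-response normalization can be bypassed for this corollary.
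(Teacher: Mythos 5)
Your overall architecture matches the paper's: closure is the only substantive issue, Remark~\ref{exp_remark} gives that \(M_fM_g\) is synchronizing, and you then push synchronization down to \(M_{fg}\) using the two (correct) observations that passing to a transition-closed subtransducer and taking strong quotients both preserve the synchronizing condition. The gap is exactly in the step you propose to ``bypass''. Writing \(T'\) for the accessible part of \(M_fM_g\), the minimization \(M_{T'}\) from the paper's construction identifies states whose \emph{output functions} agree, not states inducing the same map on \((X_n^\omega)^d\); a composite of minimal transducers generally fails to have complete response (it buffers output), so \(M_{T'}\) need not have the same transition structure as \(M_{fg}\), and there is in general no strong homomorphism from \(T'\) (or \(M_{T'}\)) onto \(M_{fg}\) at all, since a strong homomorphism preserves output functions exactly and \(M_{fg}\) has complete response while \(T'\) does not --- so the universal property of \(q_{T'}\) cannot be invoked to relate them. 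Concretely, take \(f\) to be the baker's map of Figure~\ref{baker} and \(g=f^{-1}\): then \(M_{fg}\) is a single identity state, but for each \(w\in X_2^*\) the accessible part of \(M_fM_g\) contains a state, reached by reading \((\varepsilon,w)\), that has written nothing and induces the map \(v\mapsto (v\boldsymbol{\pi}_0,\, w(v\boldsymbol{\pi}_1))\); these states have pairwise distinct output functions, so \(M_{T'}\) is infinite. Both transducers happen to be synchronizing here, but the claimed agreement of transition structures is false, so your final implication does not follow as stated.

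The paper closes this gap with precisely the complete-response normalization you hoped to avoid: it forms a transducer \(A\) with the \emph{same states and transitions} as \(M_fM_g\) but with the output function redefined so that each state writes the longest common prefix of its image as early as possible. Since synchronization depends only on \(\pi\), the transducer \(A\) and its accessible part \(A'\) remain synchronizing; and now \(A'\) genuinely is a strong quotient of \(T_{fg}\), so \(M_{fg}\) is a strong quotient of \(A'\) (by the universal property of \(q_{T_{fg}}\), not of \(q_{T'}\)) and inherits synchronization. If you insert this one intermediate transducer, the rest of your argument --- including your correct treatment of the identity, associativity, and the two preservation observations --- goes through essentially verbatim.
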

\begin{proof}
It follows from Remark~\ref{exp_remark} that, if \(f, g\in d\mathcal{S}_{n, 1}\), then \(M_fM_g\) is synchonizing.

We will now essentially minimise \(M_fM_g\) in the GNS fashion and obtain \(M_{fg}\). Let \(q_f:=(1_{(X_n^*)^d}){q_{T_f}}_Q\) and \(q_g :=(1_{(X_n^*)^d}){q_{T_g}}_Q\). We have
\[fg=f_{M_{f},q_f}f_{M_g, q_g}= f_{M_{f}M_{g}, (q_f, q_g)} .\]

We then define \(A\) to be the transducer with the same states, domain, range and transition function as \(M_fM_g\) but with \((q, w)\lambda_A= b-s\), where \(b\) is the longest common prefix of the set \((w\mathfrak{C}_n^d)f_{M_fM_g, q}\) and \(s\) is the longest common prefix of the set \((\mathfrak{C}_n^d)f_{M_fM_g, q}\) (it follows from Remark~\ref{inj_clo} that \(b\) and \(s\) are finite). Let \(A'\) be the subtransducer of \(A\) consisting of the states that are accessible from \((q_f, q_g)\) (the image of \(((q_f, q_g),\cdot)\pi_A\)).

It follows from the definition that \(A'\) is a strong quotient of the transducer \(T_{fg}\). Thus \(M_{fg}\) is a strong quotient of \(A'\). As \(A\) has the same transitions as \(M_fM_g\), it follows that \(A\) is synchronizing. Moreover, since \(A'\) is a subtransducer of \(A\), we get that \(A'\) is synchronizing and thus \(M_{fg}\) is also synchronizing (as a strong quotient of \(A'\)).
\end{proof}
\begin{corollary}
The set \(\widetilde{d\mathcal{O}_{n,1}}:=\makeset{[\C(M_f)]_{\cong_S}}{\(f\in d\mathcal{S}_{n,1}\)}\) naturally forms a monoid, which is a quotient of \(d\mathcal{S}_{n,1}\).
\end{corollary}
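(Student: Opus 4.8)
The plan is to exhibit \(\widetilde{d\mathcal{O}_{n,1}}\) as the image of the map \(\Phi\colon d\mathcal{S}_{n,1}\to \widetilde{d\mathcal{O}_{n,1}}\) sending \(f\mapsto [\C(M_f)]_{\cong_S}\), which is surjective by the very definition of \(\widetilde{d\mathcal{O}_{n,1}}\). For \(\Phi\) to be a monoid quotient the operation on classes is forced to be
\[[\C(M_f)]_{\cong_S}\cdot[\C(M_g)]_{\cong_S}:=[\C(M_{fg})]_{\cong_S},\]
so the entire content of the corollary reduces to showing that this is \emph{well defined}: that \([\C(M_{fg})]_{\cong_S}\) depends only on the classes \([\C(M_f)]_{\cong_S}\) and \([\C(M_g)]_{\cong_S}\). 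Once this is established, associativity is inherited from associativity of composition in \(d\mathcal{S}_{n,1}\) (via \(([\C(M_f)][\C(M_g)])[\C(M_h)]=[\C(M_{(fg)h})]=[\C(M_{f(gh)})]=\dots\)), the class \([\C(M_{\mathrm{id}})]_{\cong_S}\) of the single identity state is a two-sided identity, and \(\Phi\) is a surjective homomorphism by construction; hence \(\widetilde{d\mathcal{O}_{n,1}}\) is a monoid quotient of \(d\mathcal{S}_{n,1}\).

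The tool I would use for well-definedness is an invariant description of cores. Since \(\C(M_f)\) is a core of a minimal transducer, distinct states \(p\) carry distinct maps \(f_{\C(M_f),p}\) (injective with clopen image by Remark~\ref{inj_clo}), and both the output \((p,w)\lambda\) and the next-state map \(f_{\C(M_f),(p,w)\pi}\) are determined by the function \(f_{\C(M_f),p}\) alone (the output is a difference of longest common prefixes of images, and the next state is the rescaled germ of \(f_{\C(M_f),p}\) at \(w\)). Consequently two such cores are \emph{strongly isomorphic if and only if they realise the same finite set of germ functions}: the bijection matching equal functions automatically respects the transition and output data. I would record this as a short lemma, as it is the pivot of the argument.

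For well-definedness itself I would show that \(\C(M_{fg})\) is strongly isomorphic to the core of the minimal transducer of the composite \(\C(M_f)\C(M_g)\), a construction that manifestly depends only on the strong-isomorphism classes of the two cores, because composition of transducers is functorial with respect to strong homomorphisms and minimisation preserves strong isomorphism (the ranges are the cancellative monoid \((X_n^*)^d\)). Concretely, using \(f_{A,p}f_{B,q}=f_{AB,(p,q)}\) together with the corollary that \(\C(M_fM_g)\) is a subtransducer of \(\C(M_f)\C(M_g)\), the germ functions realised deep inside \(M_{fg}\) are exactly the composites \(f_{\C(M_f),p}f_{\C(M_g),q}\) attached to the deeply-reachable product states, i.e. exactly the germ functions realised by the core of \(M_{\C(M_f)\C(M_g)}\). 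The invariant description above then gives the required strong isomorphism.

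The main obstacle I anticipate is precisely this matching of germ functions: I must check that \emph{every} deep germ of \(fg\) arises as such a composite and conversely, which is where synchronisation and non-degeneracy do the real work. Reading a deep enough word in the composite \(M_fM_g\)—from which \(M_{fg}\) is obtained by identifying equivalent states as in the proof of Corollary~\ref{sync_monoid}—first drives the first factor into the finite core \(\C(M_f)\); non-degeneracy of \(\C(M_f)\) then guarantees that continuing to read produces output long enough in every coordinate to drive the second factor into \(\C(M_g)\) (this is the mechanism of Remark~\ref{exp_remark}), so the resulting germ is genuinely of the form \(f_{\C(M_f),p}f_{\C(M_g),q}\) with \((p,q)\) a deeply-reachable state of the product. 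Verifying that the set of pairs \((p,q)\) so obtained coincides with \(\C(\C(M_f)\C(M_g))\), and that passing to minimal representatives identifies exactly the coincident germ functions, is the one place demanding care; the remaining monoid axioms are then formal.
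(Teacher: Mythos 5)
Your proposal is correct and follows essentially the same route as the paper: define the product by \([\C(M_f)]_{\cong_S}[\C(M_g)]_{\cong_S}=[\C(M_{fg})]_{\cong_S}\) and establish well-definedness by showing \(\C(M_{fg})\) is recoverable from the composite \(\C(M_f)\C(M_g)\) alone (removing incomplete response, identifying equivalent states, passing to the core), exactly as in the proof of Corollary~\ref{sync_monoid}. The germ-function formulation you add is a more explicit articulation of the same mechanism the paper invokes by reference.
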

\begin{proof}
 We define
\[[\C(M_f)]_{\cong_S}[\C(M_g)]_{\cong_S}=[\C(M_{fg})]_{\cong_S}.\]
This is well defined as the strong isomorphism type \(\C(M_{fg})\) can be found by removing incomplete response from \(\C(M_f)\C(M_g)\), combining equivalent states and passing to the core (in the same manner as the proof of Corollary~\ref{sync_monoid}).
\end{proof}
\begin{defn}
We define \(d\mathcal{B}_{n, 1}\) to be the group of units of \(d\mathcal{S}_{n, 1}\), and \(d\mathcal{O}_{n,1}:=\makeset{[\C(M_f)]_{\cong_S}}{\(f\in d\mathcal{B}_{n,1}\)}\).
\end{defn}
\begin{lemma}\label{nV_is_normal}
The map \(f\mapsto [\C(M_f)]_{\cong_S}\) is a surjective group homomorphism from \(d\mathcal{B}_{n, 1}\) to \(d\mathcal{O}_{n,1}\) with kernel \(dV_n\).
\end{lemma}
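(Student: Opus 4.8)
The plan is to treat this lemma as a short deduction from the monoid structure already assembled, with the only genuine content lying in identifying the kernel. First I would observe that the map \(\Phi:f\mapsto[\C(M_f)]_{\cong_S}\) is, by the preceding corollary, a monoid homomorphism from \(d\mathcal{S}_{n,1}\) onto \(\widetilde{d\mathcal{O}_{n,1}}\). Restricting \(\Phi\) to the group of units \(d\mathcal{B}_{n,1}\) gives a monoid homomorphism whose image is exactly \(d\mathcal{O}_{n,1}\) by definition; since a homomorphic image of a group is a group and every monoid homomorphism carries inverses to inverses, this restriction is automatically a surjective group homomorphism. Thus the homomorphism property and surjectivity require essentially no further work.

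The substance is the computation of \(\ker\Phi\). I would first pin down the identity element of \(d\mathcal{O}_{n,1}\): being a quotient of \(d\mathcal{S}_{n,1}\), its identity is the image of the identity homeomorphism, namely \([\C(M_{\mathrm{id}})]_{\cong_S}\). Since \(M_{\mathrm{id}}\) is the one-state transducer whose unique state outputs each input unchanged, and since its core is itself, this identity class is represented by the single ``identity'' state. Because a strong isomorphism is a bijection on states fixing the domain, range, transition function and output function, the condition \([\C(M_f)]_{\cong_S}=[\C(M_{\mathrm{id}})]_{\cong_S}\) is equivalent to \(\C(M_f)\) consisting of a single state which acts as the identity on all inputs.

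Finally I would invoke Proposition~\ref{nV_placement}. Every \(f\in d\mathcal{B}_{n,1}\) has \(M_f\) synchronizing by the definition of \(d\mathcal{S}_{n,1}\), so the hypothesis of that proposition is met; it then states precisely that \(f\in dV_n\) if and only if the core of \(M_f\) is a single identity state. Comparing this with the kernel characterization of the previous paragraph yields \(\ker\Phi=dV_n\). It remains only to confirm the containment \(dV_n\subseteq d\mathcal{B}_{n,1}\) needed for the statement to make sense: each \(f\in dV_n\) is a synchronizing homeomorphism by Proposition~\ref{nV_placement}, and since \(dV_n\) is a group its inverse is synchronizing too, so \(f\) is a unit of \(d\mathcal{S}_{n,1}\). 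I expect the only point demanding care to be the alignment between the abstract identity of \(d\mathcal{O}_{n,1}\) and the concrete ``single identity state'' condition of Proposition~\ref{nV_placement}, that is, checking that being strongly isomorphic to the one-state identity transducer is genuinely the same property that the proposition names; once this is in hand, normality of \(dV_n\) follows for free from its being realized as a kernel.
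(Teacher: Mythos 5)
Your proposal is correct and follows essentially the same route as the paper's own proof: the homomorphism property comes from the definition of multiplication in \(\widetilde{d\mathcal{O}_{n,1}}\), surjectivity from the definition of \(d\mathcal{O}_{n,1}\), and the kernel is identified via Proposition~\ref{nV_placement} after noting that the identity of \(d\mathcal{O}_{n,1}\) is the single-state identity transducer. Your additional care in checking \(dV_n\subseteq d\mathcal{B}_{n,1}\) and in matching the strong-isomorphism class of the one-state identity transducer to the proposition's hypothesis just fills in details the paper leaves implicit.
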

\begin{proof}
This map is a homomorphism by the definition of multiplication in \(\widetilde{d\mathcal{O}_{n,1}}\), it is surjective by the definition of \(d\mathcal{O}_{n,1}\) and thus as \(d\mathcal{B}_{n, 1}\) is a group,  \(d\mathcal{O}_{n,1}\) is also. The identity of \(d\mathcal{O}_{n,1}\) is the image of the identity map, and is thus the single state ``identity" transducer. From Remark~\ref{nV_placement}, we get that \(dV_n\) is the kernel.
\end{proof}
We have now introduced the monoids we need. We now begin showing that \(d\mathcal{B}_{n, 1}\) coincides with the normalizer of \(dV_n\) in \(H(\mathfrak{C}_n^d)\) (the case with \(d=1\) was done in \cite{bleak2016}).
\begin{lemma}\label{main_lemma}
Let \(h\in N_{H(\mathfrak{C}_n^d)}(dV_n)\) and \(s,t\in (X_n^*)^d\backslash \{1_{(X_n^*)^d}\}\). Let \(q_h:= (1_{(X_n^*)^d}){q_{T_h}}_Q\). There exists \(K_{h, s,t}\in \mathbb{N}\) such that for all \(a\in (X_n^*)^d\) with \(\min(\makeset{|a\boldsymbol{\pi}_i|}{\(i\in \overline{d}\)})\geq K_{h,s,t}\), we have \((q_h,sa)\pi_{M_h} = (q_h,ta)\pi_{M_h}\).
\end{lemma}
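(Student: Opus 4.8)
The plan is to translate the desired equality of states into an equality of the associated residual homeomorphisms, and then to produce that equality by conjugating a suitable prefix-exchange element through $h$. Writing $[w]:=(q_0,w)\pi_{M_h}$, recall that $M_h$ has no two distinct equivalent states, so $[sa]=[ta]$ is equivalent to the equality of residual maps $f_{M_h,[sa]}=f_{M_h,[ta]}$; these encode $h$ locally, in the sense that $(wx)h=p_w\cdot (x)f_{M_h,[w]}$, where $p_w\in(X_n^*)^d$ is the coordinatewise longest common prefix of $(w\mathfrak{C}_n^d)h$. I first reduce to the case where the cones $s\mathfrak{C}_n^d$ and $t\mathfrak{C}_n^d$ are disjoint. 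If $s=t$ there is nothing to prove, and if (say) $s\leq t$ with $s\neq t$, then $t\mathfrak{C}_n^d\subseteq s\mathfrak{C}_n^d\subsetneq \mathfrak{C}_n^d$, so the nonempty clopen set $\mathfrak{C}_n^d\setminus s\mathfrak{C}_n^d$ contains a cone $r\mathfrak{C}_n^d$ with $r$ incomparable to $s$ (hence disjoint from $s\mathfrak{C}_n^d\supseteq t\mathfrak{C}_n^d$, so also incomparable to $t$); applying the disjoint case to the pairs $(s,r)$ and $(t,r)$ and taking the maximum of the two resulting constants then settles the comparable case.

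For the disjoint case, let $g\in dV_n$ be the prefix-exchange map swapping the cones $s\mathfrak{C}_n^d$ and $t\mathfrak{C}_n^d$ (extend $\{s,t\}$ to a complete prefix code and transpose $s\leftrightarrow t$, fixing the remaining cones), so that $(sx)g=tx$ and $(tx)g=sx$ for all $x$. Since $h\in N_{H(\mathfrak{C}_n^d)}(dV_n)$, the conjugate $k:=h^{-1}gh$ lies in $dV_n$, and a direct computation using $(sx)hh^{-1}=sx$ shows that $k$ sends $(sx)h\mapsto (tx)h$ for every $x\in\mathfrak{C}_n^d$. By Proposition~\ref{nV_placement}, $M_k$ is synchronizing with a single identity core state; let $\ell$ be its synchronizing length. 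Then on every cone $p\mathfrak{C}_n^d$ with $\min_i|p\boldsymbol{\pi}_i|\geq \ell$ the map $k$ acts as a pure translation, i.e.\ $(py)k=c_p\cdot y$ for a fixed word $c_p$ and all $y$.

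It remains to choose $K_{h,s,t}$ so that the accumulated prefixes $p_{sa}$ and $p_{ta}$ are deep. Because $h$ is a homeomorphism of the compact space $\mathfrak{C}_n^d$ it is uniformly continuous, so there is a $K_{h,s,t}$ for which every $a$ with $\min_i|a\boldsymbol{\pi}_i|\geq K_{h,s,t}$ forces both $\min_i|p_{sa}\boldsymbol{\pi}_i|\geq \ell$ and $\min_i|p_{ta}\boldsymbol{\pi}_i|\geq \ell$. Fixing such an $a$ and writing $(sax')h=p_{sa}\cdot(x')f_{M_h,[sa]}$, applying $k$ as a translation on $p_{sa}\mathfrak{C}_n^d$, and comparing with $(tax')h=p_{ta}\cdot(x')f_{M_h,[ta]}$, I obtain
\[
c_{p_{sa}}\cdot (x')f_{M_h,[sa]}=p_{ta}\cdot (x')f_{M_h,[ta]}\qquad\text{for all } x'.
\]
By Remark~\ref{inj_clo} the images of $f_{M_h,[sa]}$ and $f_{M_h,[ta]}$ have empty common prefix, so taking coordinatewise common prefixes of both sides gives $c_{p_{sa}}=p_{ta}$; left-cancelling this common word yields $f_{M_h,[sa]}=f_{M_h,[ta]}$, that is $[sa]=[ta]$, as required.

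The content lies less in any single hard idea than in two bookkeeping steps: organising the case reduction for arbitrary (possibly comparable) $s,t$, and verifying that the synchronizing length of $k$ together with uniform continuity makes $p_{sa}$ and $p_{ta}$ \emph{simultaneously} deep. I expect the latter to be the most delicate point, since it is exactly where the normalizer hypothesis is genuinely used: via Proposition~\ref{nV_placement} it converts the global statement ``$k$ is a prefix exchange'' into the uniform local statement that $k$ translates all sufficiently deep cones, and a single $K_{h,s,t}$ must then be calibrated against the a priori unknown synchronizing length $\ell$ of the conjugate.
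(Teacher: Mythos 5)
Your overall strategy is the paper's: use the normalizer hypothesis to conjugate a prefix-exchange through \(h\), observe that the conjugate lies in \(dV_n\) and hence (having a single identity core state) acts as a pure translation on all sufficiently deep cones, and combine this with the complete response of \(M_h\) to force \(f_{M_h,[sa]}=f_{M_h,[ta]}\) and then \([sa]=[ta]\) by minimality. The endgame (comparing prefixes, cancelling, invoking minimality) matches the paper's almost verbatim, just phrased with homeomorphisms and cones rather than with the composite transducers \(M_fM_h\) and \(M_hM_g\).

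There is, however, a concrete hole in your case reduction. For \(d\geq 2\) the trichotomy ``\(s=t\), or \(s,t\) comparable, or \(s\mathfrak{C}_n^d\cap t\mathfrak{C}_n^d=\emptyset\)'' is not exhaustive: two incomparable elements of \((X_n^*)^d\) can have intersecting cones (e.g.\ \(s=(0,\varepsilon)\), \(t=(\varepsilon,0)\) in \(2V\)), in which case \(\{s,t\}\) extends to no complete prefix code and your transposition \(g\) does not exist. This case is covered by your own trick --- when the cones overlap but neither contains the other, one finds coordinates \(i\neq j\) with \(t\boldsymbol{\pi}_i<s\boldsymbol{\pi}_i\) and \(s\boldsymbol{\pi}_j<t\boldsymbol{\pi}_j\), so a cone \(r\mathfrak{C}_n^d\) disjoint from both exists and the pairs \((s,r)\), \((t,r)\) reduce to the disjoint case --- but as written the argument does not apply to all \(s,t\) in the statement. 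The paper sidesteps the whole case analysis by taking a one-directional element \(f\in dV_n\) that replaces the prefix \(s\) by the prefix \(t\); such an \(f\) exists for arbitrary nontrivial \(s,t\) regardless of how the cones sit, which is what its symmetric treatment of \(fh=hg\) buys over your transposition. One smaller point: the claim that \(\operatorname{img}(f_{M_h,[sa]})\) has empty coordinatewise common prefix does not follow from Remark~\ref{inj_clo} (injectivity with clopen image is compatible with the image being a proper cone); it is the complete-response property of \(M_h\), built into the definition of \(T_h\), which is what the paper invokes at the corresponding step.
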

\begin{proof}
For all \(x\in (X_n^*)^d\), let \(q_x := (q_h, x)\pi_{M_h}\). Let \(f\in dV_n\) be such that \(f\) replaces the prefix \(s\) with the prefix \(t\). By the choice of \(h\), there is some \(g\in nV\) such that \(h^{-1}fh=g\) and so \(fh=hg\).

Let \(q_f:= (1_{(X_n^*)^d}){q_{T_f}}_Q\) and \(q_g:= (1_{(X_n^*)^d}){q_{T_g}}_Q\). It follows that \(f_{M_fM_h, (q_f,q_h)} =fh=hg= f_{M_hM_g, (q_h, q_g)}.\) Let \(I_f, I_g\) be the core states of \(M_f\) and \(M_g\) respectively (which don't do anything). Note that
\[((q_f, q_h), s)\pi_{M_fM_h} = (I_f, q_t),\quad ((q_h, q_g), s)\pi_{M_hM_g} = (q_s, (q_g,(q_h, s)\lambda_{M_h})\pi_{M_g}).\]
Let \(K\in \mathbb{N}\) be such that for all \(w\in (X_n^*)^d\) with \(\min(\makeset{|w\boldsymbol{\pi}_i|}{\(i\in \overline{d}\)})\geq K\), we have
\(\min(\makeset{|((q_h,s)\pi_{M_h}, w)\lambda_{M_h}\boldsymbol{\pi}_i|}{\(i\in \overline{d}\)})\) is at least the synchronizing length of \(M_g\).

Let \(a\in (X_n^*)^d\) be arbitrary such that \(\min(\makeset{|w\boldsymbol{\pi}_i|}{\(i\in
\overline{d}\)}\geq K\). We have
\[((q_f, q_h), sa)\pi_{M_fM_h} = (I_f, q_{ta}),\quad ((q_h, q_g), sa)\pi_{M_hM_g} = (q_{sa}, I_g).\]
Thus, for all \(v\in (X_n^\omega)^d\) we have
\begin{align*}
((q_f, q_h),sa)\lambda_{M_fM_h}(v)f_{M_h, q_{ta}}
&=((q_f, q_h),sa)\lambda_{M_fM_h}(v)f_{M_fM_h, (I_f,q_{ta})}\\
&=(sav)fh\\
&= (sav)hg\\
&=((q_h, q_g),sa)\lambda_{M_hM_g}(v)f_{M_hM_g, (q_{sa},I_g)}\\
&=((q_h, q_g),sa)\lambda_{M_hM_g}(v)f_{M_h, q_{sa}}
\end{align*}

It follows that \(((q_f, q_h),sa)\lambda_{M_fM_h}\) and \(((q_h, q_g),sa)\lambda_{M_hM_g}\) are comparable in each coordinate.

If there was a coordinate in which \(((q_f, q_h),sa)\lambda_{M_fM_h}\) and \(((q_h, q_g),sa)\lambda_{M_hM_g}\) differed, it would follow that either the map \(f_{M_h, q_{ta}}\) or \(f_{M_h, q_{sa}}\) has its image contained in a proper cone. This is impossible as \(M_h\) by definition has no incomplete response. Thus \(((q_f, q_h),sa)\lambda_{M_fM_h} = ((q_h, q_g),sa)\lambda_{M_hM_g}\).

From the equality
\[((q_f, q_h),sa)\lambda_{M_fM_h}(v)f_{M_h, q_{ta}}
=((q_h, q_g),sa)\lambda_{M_hM_g}(v)f_{M_h, q_{sa}}\]
it follows that \(f_{M_h, q_{ta}}=f_{M_h, q_{sa}}\). As \(M_h\) is minimal it follows that \(q_{ta}=q_{sa}\) as required.
\end{proof}
\begin{lemma}\label{hard_aut_containment}
The group \(N_{H(\mathfrak{C}_n^d)}(dV_n)\) is contained in \(d\mathcal{B}_{n, 1}\).
\end{lemma}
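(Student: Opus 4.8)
The plan is to prove that \(M_h\) is a synchronizing transducer for every \(h\in N_{H(\mathfrak{C}_n^d)}(dV_n)\); this already gives the lemma. Indeed, \(N_{H(\mathfrak{C}_n^d)}(dV_n)\) is a group and \(d\mathcal{B}_{n,1}\) is by definition the group of units of the monoid \(d\mathcal{S}_{n,1}\), so applying the synchronization claim to both \(h\) and \(h^{-1}\) (both of which normalize \(dV_n\)) shows \(h,h^{-1}\in d\mathcal{S}_{n,1}\), and since \(hh^{-1}=h^{-1}h\) is the identity, \(h\) is a unit, i.e.\ \(h\in d\mathcal{B}_{n,1}\).

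So everything reduces to synchronization. Set \(q_h:=(1_{(X_n^*)^d})q_{T_h}\). Every state of \(M_h\) has the form \(q_x:=(q_h,x)\pi_{M_h}\) for some \(x\in(X_n^*)^d\), so for each \(w\) the set of states reachable by reading \(w\) from somewhere is
\[S_w:=\makeset{(q,w)\pi_{M_h}}{\(q\in Q_{M_h}\)}=\makeset{q_{xw}}{\(x\in(X_n^*)^d\)},\]
and \(M_h\) is synchronizing at level \(k\) precisely when \(|S_w|=1\) for all \(w\) with \(\min_i|w\boldsymbol{\pi}_i|\geq k\). The elementary engine of the argument is that \(|S_w|\) is non-increasing under extension: if \(w\leq w'\), say \(w'=wa\), then reading \(a\) gives a surjection \(S_w\to S_{w'}\), whence \(|S_{w'}|\leq|S_w|\).

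I would first show that every point \(p\in\mathfrak{C}_n^d\) eventually synchronizes, i.e.\ some finite prefix \(w\leq p\) has \(|S_w|=1\). Fix \(p\), let \(w^{(m)}\) be the prefix truncating each coordinate of \(p\) to length \(m\); then \(|S_{w^{(m)}}|\) is a non-increasing sequence of positive integers, hence eventually equal to a constant \(N\). If \(N\geq 2\) then for all large \(m\) the surjections \(S_{w^{(m)}}\to S_{w^{(m+1)}}\) are bijections, so reading any further stretch of \(p\) acts injectively on \(S_{w^{(m)}}\). Choosing distinct \(q_{xw^{(m)}}\neq q_{yw^{(m)}}\) in \(S_{w^{(m)}}\) (with \(m\geq 1\), so \(w^{(m)}\) is nonempty) and applying Lemma~\ref{main_lemma} to \(s=xw^{(m)}\), \(t=yw^{(m)}\) with \(a\) a sufficiently long further stretch of \(p\) forces \(q_{xw^{(m)}a}=q_{yw^{(m)}a}\), contradicting injectivity. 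Hence \(N=1\) and some finite prefix of \(p\) already gives \(|S_w|=1\).

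Finally I would upgrade this pointwise statement to a uniform level by compactness. For each \(p\) pick a prefix \(w^{(p)}\) with \(|S_{w^{(p)}}|=1\); the cones \(w^{(p)}\mathfrak{C}_n^d\) cover the compact space \(\mathfrak{C}_n^d\), so finitely many of them cover, and if \(k\) is chosen at least as large as every coordinate-length of these finitely many \(w^{(p)}\), then any \(w\) with \(\min_i|w\boldsymbol{\pi}_i|\geq k\) has its (nonempty) cone meeting, hence contained in, one of the chosen cones, so \(|S_w|\leq 1\) by monotonicity. Thus \(M_h\) is synchronizing at level \(k\). The main obstacle is exactly this last passage: Lemma~\ref{main_lemma} only supplies a per-pair bound \(K_{h,s,t}\) which is a priori unbounded, and \(M_h\) may genuinely have infinitely many states (as the baker's map shows), so no naive finiteness argument applies; the stabilization-and-compactness mechanism above is precisely what converts the pointwise synchronization coming from Lemma~\ref{main_lemma} into the required uniform level.
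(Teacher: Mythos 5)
Your overall strategy (reduce to showing that \(M_h\) is synchronizing, establish synchronization pointwise along each \(p\in\mathfrak{C}_n^d\), then upgrade to a uniform level by compactness) is sensible, and both the reduction to \(d\mathcal{S}_{n,1}\) and the final compactness step are correct. But the pointwise step has a genuine gap: you treat \(|S_{w^{(m)}}|\) as ``a non-increasing sequence of positive integers, hence eventually constant'', which presupposes that the sets \(S_{w^{(m)}}=\{(q,w^{(m)})\pi_{M_h}:q\in Q_{M_h}\}\) are finite for some \(m\). Nothing in your argument establishes this. As you yourself observe, \(M_h\) may have infinitely many states, and already for the baker's map the set \(S_w\) is infinite for suitable nonempty \(w\) (e.g.\ \(w=(\varepsilon,0)\)), so finiteness of these reachable-state sets is not automatic; for a general element of the normalizer there is no a priori reason that truncating every coordinate of \(p\) to length \(m\) ever produces a \(w^{(m)}\) with \(S_{w^{(m)}}\) finite. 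If every \(S_{w^{(m)}}\) is infinite, the cardinalities never stabilize, the transition maps \(S_{w^{(m)}}\to S_{w^{(m+1)}}\) need never become injective, and the contradiction you extract from Lemma~\ref{main_lemma} evaporates. So the mechanism you advertise as converting pointwise synchronization into a uniform level is itself resting on an unproved finiteness claim.

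The paper avoids this entirely by extracting uniformity from Lemma~\ref{main_lemma} \emph{before} looking at individual points: it invokes the lemma only for the finitely many pairs \((s,t)\) with \(s\) of total size \(1\) and \(t\) of total size \(2\), takes \(K\) to be the maximum of the finitely many constants \(K_{h,s,t}\) so obtained, and then collapses an arbitrary input word \(xw\) from the front, repeatedly exchanging a size-\(2\) head for a size-\(1\) one without changing the state reached (legitimate because the remaining tail always has every coordinate of length at least \(K\)). This shows directly that for \(w\) sufficiently long in every coordinate the state \((q_h,xw)\pi_{M_h}\) is independent of \(x\), i.e.\ \(|S_w|=1\), with no finiteness hypothesis and no compactness needed. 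To salvage your argument you would first have to prove that some \(S_{w^{(m)}}\) is finite, and the only visible route to that is essentially the paper's collapsing argument, which already yields \(|S_w|=1\) outright and makes the stabilization machinery redundant.
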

\begin{proof}
The proof of this is essentially the same as Corollary 6.17 of \cite{bleak2016}. The idea is as follows. We need only show containment in \(d\mathcal{S}_{n, 1}\) because \(N_{H(\mathfrak{C}_n^d)}(dV_n)\) is a group with the same identity as \(d\mathcal{S}_{n, 1}\). Thus we need only show the synchronizing condition. So it suffices to show that, for all \(h\in N_{H(\mathfrak{C}_n^d)}(dV_n)\), there is a \(K\in \N\) such that the state reached by reading an arbitrary word from \((1_{(X_n^*)^d}){q_{T_h}}_Q\) is determined by the last \(K\) letters of the word (in every coordinate). We do this by collapsing an arbitrary given input word from the front by repeated applications of Lemma~\ref{main_lemma} using \(s\) with size \(1\) and \(t\) with size \(2\) (where size means the sum of the lengths of the coordinates).
\end{proof}
We now recall the theorem of Rubin which connects our arguments to automorphism groups:

\begin{theorem}[Rubin's Theorem \cite{Rubin}]
Let \(G\) be a group of homeomorphisms of a perfect, locally compact, Hausdorff topological space \(X\). For \(U\subseteq X\) let \(G_U := \{g\in G: (x)g=x \text{ for all }x\in X\backslash U\}\). Suppose further that for all \(x\in X\) and \(U\) a neighbourhood of \(x\), we have \((x)G_U\) is somewhere dense. If \(\phi: G \to G\) is a group isomorphism then there is a \(\psi_\phi\in H(X)\) such that \((g)\phi=\phi_{\phi}^{-1}g\psi_{\phi}\) for all \(g\in G\).
\end{theorem}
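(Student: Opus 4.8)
This is Rubin's reconstruction theorem, so the plan is to recover the space \(X\) — together with its topology — from the abstract group \(G\) alone, and then to read off \(\psi_\phi\) from the algebraic isomorphism \(\phi\) by a Stone-type duality. The bridge between group and space is the family of rigid stabilizers \(G_U\) already introduced in the statement. For \(g\in G\) write \(\mathrm{supp}(g)=\mathrm{cl}\{x : (x)g\neq x\}\) and let its \emph{regular support} be the regular open set \(\mathrm{int}(\mathrm{supp}(g))\); the hypothesis that \((x)G_U\) is somewhere dense for every neighbourhood \(U\) of every \(x\) (Rubin's local density condition) is exactly what guarantees that \(G\) has enough elements supported in small sets for supports to detect the topology. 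The first and most technical block of work is to write down predicates, in the pure group language of \(G\), that express the geometric relations ``\(\mathrm{supp}(g)\) and \(\mathrm{supp}(h)\) are disjoint'' and ``\(\mathrm{supp}(g)\subseteq \mathrm{supp}(h)\)''. Disjointness is close to commutation, but commutation alone is too coarse; following Rubin one instead phrases these relations through the behaviour of the subgroups \(G_U\) and through iterated commutators, using local density to exclude the degenerate configurations. The payoff of this block is that the Boolean algebra \(\mathrm{RO}(X)\) of regular open sets of \(X\), with its order and complementation, becomes definable from the group structure of \(G\) via the assignment \(U\mapsto G_U\).

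Second, since every isomorphism \(\phi:G\to G\) preserves all purely group-theoretic predicates, it must carry rigid stabilizers to rigid stabilizers, and hence induces an order- and complement-preserving automorphism \(\Phi\) of \(\mathrm{RO}(X)\). The topological hypotheses are consumed here: because \(X\) is perfect, locally compact and Hausdorff, each point \(x\in X\) can be recovered from \(\mathrm{RO}(X)\) as the filter of regular open sets containing it — and, crucially, one must identify \emph{which} filters of the Boolean algebra correspond to genuine points of \(X\) rather than to spurious points of its Gleason absolute. Local density together with the stabilizer data is what singles out the realized points, so \(\Phi\) restricts to a bijection of the recovered point set, which is a homeomorphism \(\psi_\phi:X\to X\).

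Finally I would verify the intertwining \((g)\phi=\psi_\phi^{-1}g\psi_\phi\). The natural route is to check that \(\phi\) and the conjugation map \(g\mapsto \psi_\phi^{-1}g\psi_\phi\) induce the same transformation of \(\mathrm{RO}(X)\) (the latter acting through \(\psi_\phi\) itself), and then to use that under local density the action of \(G\) on points is determined by its action on regular open sets, so that agreement on \(\mathrm{RO}(X)\) upgrades to the pointwise identity \(((x)g)\psi_\phi=((x)\psi_\phi)((g)\phi)\) for all \(x\). I expect the main obstacle to be twofold and to lie entirely in the first two steps: (i) producing correct purely group-theoretic definitions of support-disjointness and support-containment and proving them under only the somewhere-dense hypothesis, and (ii) pinning down which points of the reconstructed Boolean algebra are the actual points of \(X\), since \(\mathrm{RO}(X)\) by itself does not determine \(X\) up to homeomorphism. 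Once \(\mathrm{RO}(X)\) and the realized point set are available group-theoretically, passing from \(\phi\) to \(\psi_\phi\) and checking the conjugation are comparatively formal.
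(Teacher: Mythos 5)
The paper does not prove this statement: it is imported verbatim as a black box from Rubin's work (note the typo \(\phi_\phi^{-1}\) for \(\psi_\phi^{-1}\) in the paper's statement, which you silently correct), and it is only ever \emph{used}, in Corollary~\ref{rubin_cor}, to identify \(\A(dV_n)\) with the normalizer of \(dV_n\) in \(H(\mathfrak{C}_n^d)\). So there is no in-paper argument to compare yours against. Your outline does accurately describe the strategy of Rubin's actual proof: interpret the Boolean algebra \(\mathrm{RO}(X)\) of regular open sets inside the group via the rigid stabilizers \(G_U\), show any abstract isomorphism of \(G\) induces an automorphism of \(\mathrm{RO}(X)\), recover the points of \(X\) as suitable filters, and then verify the intertwining identity.

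That said, as a proof this is a roadmap rather than an argument. The two items you yourself flag as ``the main obstacle'' --- (i) writing down purely group-theoretic formulas for disjointness and containment of regular supports that are provably correct under only the somewhere-dense hypothesis, and (ii) characterizing which filters of \(\mathrm{RO}(X)\) correspond to genuine points of \(X\) rather than points of the Stone space of the algebra --- are not side issues: they constitute essentially the entire content of Rubin's theorem, and occupy most of his paper. No candidate formulas are proposed and no argument is sketched for why local density suffices to validate them, so nothing in the proposal could be checked or could fail; it defers exactly the parts where all known proofs are delicate. If your goal is to justify the theorem as used here, the honest options are either to cite it (as the paper does) or to carry out steps (i) and (ii) in detail; the present text does neither.
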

In \cite{bleak2016}, it is shown that Rubin's theorem allows us to naturally embed \(\A(G_{n, r})\) into \(H(\mathfrak{C}_{n, r})\). This same argument also applies to \(dV_n\), and in fact to any group with an action satisfying the hypothesis of Rubin's theorem.
\begin{corollary}\label{rubin_cor}
The groups \(\A(dV_n)\) and \(N_{H(\mathfrak{C}_n^d)}(dV_n)\) are isomorphic.
\end{corollary}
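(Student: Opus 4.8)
The plan is to apply Rubin's Theorem to the action of \(G = dV_n\) on \(X = \mathfrak{C}_n^d\), exactly as is done for \(\A(G_{n,r})\) in \cite{bleak2016}, and then to identify the homeomorphisms it produces with the normalizer. First I would check the hypotheses of the theorem. The space \(\mathfrak{C}_n^d\) is a finite product of copies of Cantor space, hence is itself perfect, compact (so locally compact) and Hausdorff. For the density hypothesis, fix \(x\in \mathfrak{C}_n^d\) and an open neighbourhood \(U\) of \(x\), and choose a cone \(C\subseteq U\) with \(x\in C\). Every prefix exchange map supported inside \(C\) lies in \((dV_n)_U\), and since \(dV_n\) can carry any subcone of \(C\) onto any prescribed subcone of \(C\), a small cone about \(x\) can be mapped into any subcone of \(C\). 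Hence \((x)(dV_n)_U\) is dense in the open set \(C\), and in particular somewhere dense, so the hypotheses hold.

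Next I would define the conjugation homomorphism \(\Theta\colon N_{H(\mathfrak{C}_n^d)}(dV_n)\to \A(dV_n)\) sending \(h\) to the map \(g\mapsto h^{-1}gh\); this is a genuine automorphism of \(dV_n\) precisely because \(h\) normalizes \(dV_n\), and it is a homomorphism under the left-to-right composition convention. Rubin's Theorem supplies, for each \(\phi\in \A(dV_n)\), a homeomorphism \(\psi_\phi\in H(\mathfrak{C}_n^d)\) with \((g)\phi = \psi_\phi^{-1}g\psi_\phi\) for all \(g\in dV_n\); since \(\phi\) is surjective, \(\psi_\phi\) conjugates \(dV_n\) onto itself and therefore lies in \(N_{H(\mathfrak{C}_n^d)}(dV_n)\). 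Thus \(\Theta\) is surjective, and the corollary reduces to showing that \(\Theta\) is injective, that is, that the centralizer \(C_{H(\mathfrak{C}_n^d)}(dV_n)\) is trivial.

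The main obstacle is precisely this triviality of the centralizer. I would argue as follows. Suppose \(c\) commutes with every \(g\in dV_n\), and write \(\operatorname{supp}(g)\) for the set of points moved by \(g\). A direct computation gives \(\operatorname{supp}(c^{-1}gc) = (\operatorname{supp} g)c\), so the relation \(c^{-1}gc = g\) forces \(c\) to preserve \(\operatorname{supp}(g)\) setwise for every \(g\). Now every cone \(w\mathfrak{C}_n^d\) arises as the support of some element of \(dV_n\): split the cone into \(n\) subcones along the first coordinate and cyclically permute them (since \(n\geq 2\) this moves every point of the cone and fixes everything outside it). Consequently \(c\) preserves every cone setwise. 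Since the cones form a basis of clopen sets separating the points of \(\mathfrak{C}_n^d\), for each \(x\) the points \(x\) and \((x)c\) lie in exactly the same cones, whence \((x)c = x\). Therefore \(c\) is the identity, the centralizer is trivial, and \(\Theta\) is an isomorphism, yielding \(\A(dV_n)\cong N_{H(\mathfrak{C}_n^d)}(dV_n)\).

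Everything here except the centralizer computation is a routine verification of the Rubin hypotheses; I would expect the cyclic-permutation construction realizing each cone as a support, together with the ``cones separate points'' step, to be the only place requiring genuine (if short) argument, and it is entirely parallel to the one-dimensional case treated in \cite{bleak2016}.
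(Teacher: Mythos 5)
Your proposal is correct and follows exactly the route the paper intends: the paper's ``proof'' is only the remark that the argument of \cite{bleak2016} (verify Rubin's hypotheses for the action on \(\mathfrak{C}_n^d\), realise every automorphism as conjugation by some \(\psi_\phi\in N_{H(\mathfrak{C}_n^d)}(dV_n)\), and kill the kernel by showing the centralizer is trivial) carries over verbatim to \(dV_n\). You have simply written out the details the paper leaves implicit, including the two points that genuinely need checking --- the somewhere-density of \((x)G_U\) via prefix exchanges supported in a cone, and the triviality of the centralizer via cones realised as supports --- and both are handled correctly.
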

\begin{theorem}\label{autnV}
The groups \(\A(dV_n)\) and \(d\mathcal{B}_{n,1}\) are isomorphic.
\end{theorem}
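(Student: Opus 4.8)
The plan is to prove the theorem by identifying both groups with the normalizer \(N_{H(\mathfrak{C}_n^d)}(dV_n)\), and to do this I would establish the equality \(N_{H(\mathfrak{C}_n^d)}(dV_n) = d\mathcal{B}_{n,1}\) by a pair of opposite inclusions, after which Corollary~\ref{rubin_cor} closes the argument. Indeed, Corollary~\ref{rubin_cor} already supplies \(\A(dV_n) \cong N_{H(\mathfrak{C}_n^d)}(dV_n)\), so the entire remaining content of the theorem reduces to showing that this normalizer coincides with the group of units \(d\mathcal{B}_{n,1}\) of the synchronizing monoid \(d\mathcal{S}_{n,1}\).

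For the inclusion \(d\mathcal{B}_{n,1} \subseteq N_{H(\mathfrak{C}_n^d)}(dV_n)\) I would invoke Lemma~\ref{nV_is_normal}. First one checks that \(dV_n \subseteq d\mathcal{B}_{n,1}\): by Proposition~\ref{nV_placement} every \(f \in dV_n\) has \(M_f\) synchronizing, so \(f \in d\mathcal{S}_{n,1}\), and since \(dV_n\) is a group each such \(f\) is a unit. Lemma~\ref{nV_is_normal} then exhibits \(dV_n\) as the kernel of the group homomorphism \(f \mapsto [\C(M_f)]_{\cong_S}\) from \(d\mathcal{B}_{n,1}\) onto \(d\mathcal{O}_{n,1}\); being a kernel, \(dV_n\) is normal in \(d\mathcal{B}_{n,1}\). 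As \(d\mathcal{B}_{n,1}\) is a subgroup of \(H(\mathfrak{C}_n^d)\) in which \(dV_n\) is normal, it sits inside the normalizer \(N_{H(\mathfrak{C}_n^d)}(dV_n)\), as required.

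The reverse inclusion \(N_{H(\mathfrak{C}_n^d)}(dV_n) \subseteq d\mathcal{B}_{n,1}\) is precisely Lemma~\ref{hard_aut_containment}. Combining the two inclusions yields \(N_{H(\mathfrak{C}_n^d)}(dV_n) = d\mathcal{B}_{n,1}\), and feeding this equality into Corollary~\ref{rubin_cor} gives \(\A(dV_n) \cong d\mathcal{B}_{n,1}\), completing the proof.

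The main obstacle is not in the assembly above, which is essentially bookkeeping once the lemmas are in hand, but has already been absorbed into Lemma~\ref{hard_aut_containment} and, through it, Lemma~\ref{main_lemma}. The delicate point there is the synchronization estimate: for a homeomorphism \(h\) normalizing \(dV_n\), one must show that the state of \(M_h\) reached after reading a word is determined by a bounded-length suffix in each coordinate. This is what forces \(M_h\) to be synchronizing, so that \(h \in d\mathcal{S}_{n,1}\); membership in the group of units then follows since the normalizer is itself a group. Were those lemmas unavailable, this uniform synchronization bound, obtained by iteratively collapsing prefixes via the conjugation relation \(fh = hg\) of Lemma~\ref{main_lemma}, would be the genuine crux of the theorem.
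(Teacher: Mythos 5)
Your proposal is correct and follows exactly the paper's argument: identify \(\A(dV_n)\) with \(N_{H(\mathfrak{C}_n^d)}(dV_n)\) via Corollary~\ref{rubin_cor}, then obtain the two inclusions from Lemma~\ref{hard_aut_containment} and Lemma~\ref{nV_is_normal} respectively. Your extra remarks justifying \(dV_n \subseteq d\mathcal{B}_{n,1}\) and locating the real difficulty in the synchronization lemma are accurate elaborations of the same route.
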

\begin{proof}
We have \(\A(dV_n)\cong N_{H(\mathfrak{C}_n^d)}(dV_n)\) by Corollary~\ref{rubin_cor}, and we have 
\[N_{H(\mathfrak{C}_n^d)}(dV_n)\subseteq d\mathcal{B}_{n, 1}\subseteq N_{H(\mathfrak{C}_n^d)}(dV_n)\]
by Lemma~\ref{hard_aut_containment} and Lemma~\ref{nV_is_normal}.
\end{proof}
\begin{corollary}\label{outrubin}
The groups \(\Out(dV_n)\) and \(d\mathcal{O}_{n,1}\) are isomorphic.
\end{corollary}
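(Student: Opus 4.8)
The plan is to realise \(\Out(dV_n)\) as the quotient \(\A(dV_n)/\operatorname{Inn}(dV_n)\) and then transport this quotient across the isomorphisms already in hand, landing on the quotient of \(d\mathcal{B}_{n,1}\) by \(dV_n\) computed in Lemma~\ref{nV_is_normal}. First I would fix the isomorphism \(\Theta : \A(dV_n) \to d\mathcal{B}_{n,1}\) supplied by Theorem~\ref{autnV}. By the way that theorem is proved, \(\Theta\) is nothing but Rubin's correspondence \(\phi \mapsto \psi_\phi\) of Corollary~\ref{rubin_cor}, followed by the identification \(N_{H(\mathfrak{C}_n^d)}(dV_n) = d\mathcal{B}_{n,1}\). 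The whole argument then reduces to pinning down the image \(\Theta(\operatorname{Inn}(dV_n))\) inside \(d\mathcal{B}_{n,1}\).

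The key step is to show that \(\Theta(\operatorname{Inn}(dV_n)) = dV_n\), where \(dV_n\) is regarded as the normal subgroup of \(d\mathcal{B}_{n,1}\) appearing in Lemma~\ref{nV_is_normal}. Since \(\Theta\) is induced by the conjugation action of the normalizer on \(dV_n\), for \(g_0 \in dV_n\) the inner automorphism \(g \mapsto g_0^{-1} g g_0\) is exactly the automorphism conjugation by \(g_0 \in dV_n \subseteq N_{H(\mathfrak{C}_n^d)}(dV_n)\) produces, so \(\Theta^{-1}\) sends it back to \(g_0\); conversely every element of \(dV_n\) conjugates to an inner automorphism. As \(\Theta\) is a bijection, this matching is exact, yielding \(\Theta(\operatorname{Inn}(dV_n)) = dV_n\) (and incidentally \(\operatorname{Inn}(dV_n)\cong dV_n\), since \(dV_n\) is centreless).

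With this identification, \(\Theta\) descends to an isomorphism
\[\Out(dV_n) = \A(dV_n)/\operatorname{Inn}(dV_n) \;\cong\; d\mathcal{B}_{n,1}/dV_n,\]
and Lemma~\ref{nV_is_normal} identifies the right-hand side with \(d\mathcal{O}_{n,1}\), the map \(f \mapsto [\C(M_f)]_{\cong_S}\) being a surjection \(d\mathcal{B}_{n,1} \to d\mathcal{O}_{n,1}\) with kernel \(dV_n\). Composing these isomorphisms gives \(\Out(dV_n) \cong d\mathcal{O}_{n,1}\).

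I expect the only genuine obstacle to lie in the second step, namely the \emph{exact} matching of inner automorphisms with the subgroup \(dV_n\). This rests on the faithfulness of the conjugation action, i.e.\ on the triviality of the centralizer \(C_{H(\mathfrak{C}_n^d)}(dV_n)\); were the centralizer nontrivial, \(\Theta\) would fail to be injective and the image of \(\operatorname{Inn}(dV_n)\) would only be \(dV_n\) up to that centralizer. Fortunately this triviality is precisely what makes Corollary~\ref{rubin_cor} an isomorphism rather than a mere surjection, so it comes for free from the results assumed above, and everything else is a routine application of the third isomorphism theorem.
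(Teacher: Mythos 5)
Your proposal is correct and is essentially the paper's own argument: the paper's proof is the one-line observation that the corollary follows from Theorem~\ref{autnV} and Lemma~\ref{nV_is_normal}, and you have simply spelled out the details — identifying \(\operatorname{Inn}(dV_n)\) with the copy of \(dV_n\) inside \(d\mathcal{B}_{n,1}\) via the Rubin correspondence (using triviality of the centralizer) and then applying Lemma~\ref{nV_is_normal} to identify the quotient with \(d\mathcal{O}_{n,1}\). Nothing further is needed.
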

\begin{proof}
This follows from Theorem~\ref{autnV} and Lemma~\ref{nV_is_normal}.
\end{proof}
\begin{corollary}
For \(d,m\in \N\backslash \{0\}\) and \(n\in \mathbb{N}\backslash \{0,1\}\) the group \(\A(dV_n)^m\) embeds in the group \(\A((md)V_n)\).
\end{corollary}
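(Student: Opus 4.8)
The plan is to identify $\mathfrak{C}_n^{md}$ with $(\mathfrak{C}_n^d)^m$ by grouping the $md$ coordinates into $m$ consecutive blocks of length $d$, and to send a tuple of automorphisms to the homeomorphism acting one block at a time. Using Theorem~\ref{autnV} to identify $\A(dV_n)$ with $d\mathcal{B}_{n,1}=N_{H(\mathfrak{C}_n^d)}(dV_n)$, I would define
\[\Phi\colon (d\mathcal{B}_{n,1})^m\to H(\mathfrak{C}_n^{md}),\qquad (h_0,\dots,h_{m-1})\mapsto h_0\times\cdots\times h_{m-1},\]
where the target map applies $h_i$ to the $i^{\text{th}}$ block of coordinates. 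Because homeomorphisms compose block-wise and a block-wise product is the identity only if every factor is, $\Phi$ is an injective group homomorphism, and the entire problem reduces to showing that its image lies in $(md)\mathcal{B}_{n,1}=N_{H(\mathfrak{C}_n^{md})}((md)V_n)\cong \A((md)V_n)$.

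To analyse $h:=h_0\times\cdots\times h_{m-1}$ I would assemble a transducer for it from the $M_{h_i}$ using the categorical product $P:=\prod_{i<m}M_{h_i}$. This is a $(md,n)$-transducer with domain and range $\prod_{i<m}(X_n^*)^d=(X_n^*)^{md}$ whose transition and output functions act block-wise, so reading an element of $(X_n^\omega)^{md}$ from the tuple of start states computes exactly $h$; moreover $P$ is non-degenerate since each $M_{h_i}$ is. It is also synchronizing: if $M_{h_i}$ is synchronizing at level $k_i$, then any $w$ with $\min_i|w\boldsymbol{\pi}_i|\ge\max_i k_i$ has every coordinate of each of its $m$ blocks long enough to synchronize the corresponding factor, so the block-wise transition reaches a state independent of the starting state. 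Hence $P$ is synchronizing at level $\max_i k_i$.

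The key step is a lemma asserting that any homeomorphism computed from a state of a non-degenerate synchronizing transducer is synchronizing, i.e.\ has synchronizing minimal transducer. I would prove it by noting that the state of $M_h$ reached after reading a word $u$ is exactly the residual map $f_{T_h,u}$, which (writing $q$ for the start tuple of $P$) is obtained from $f_{P,(q,u)\pi_P}$ by stripping the longest common prefix of its image; consequently this $M_h$-state depends on $u$ only through the $P$-state $(q,u)\pi_P$. Writing $u=sa$ and $u'=s'a$ with $a$ long enough in every coordinate, the action identities $(q,sa)\pi_P=((q,s)\pi_P,a)\pi_P$ and $(q,s'a)\pi_P=((q,s')\pi_P,a)\pi_P$ together with the synchronizing property of $P$ give $(q,sa)\pi_P=(q,s'a)\pi_P$, whence the $M_h$-states after $sa$ and $s'a$ coincide; this is precisely synchronization of $M_h$. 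Applying the lemma to $P$ shows $h$ is synchronizing, and applying it to $\prod_{i<m}M_{h_i^{-1}}$ (since $h^{-1}=h_0^{-1}\times\cdots\times h_{m-1}^{-1}$ with each $h_i^{-1}\in d\mathcal{B}_{n,1}$) shows $h^{-1}$ is synchronizing. Thus $h,h^{-1}\in (md)\mathcal{S}_{n,1}$, so $h$ is a unit, i.e.\ $h\in(md)\mathcal{B}_{n,1}$, and chaining $\A(dV_n)^m\cong(d\mathcal{B}_{n,1})^m\xrightarrow{\Phi}(md)\mathcal{B}_{n,1}\cong\A((md)V_n)$ finishes the embedding.

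I expect the lemma of the previous paragraph to be the main obstacle, since $M_h$ is constructed from $h$ rather than from $P$, and one must carefully track how the residual maps and the common-prefix stripping interact in order to see that the $M_h$-state genuinely depends only on the $P$-state; everything else is routine bookkeeping with the product construction and the identifications of Theorem~\ref{autnV} and Corollary~\ref{rubin_cor}.
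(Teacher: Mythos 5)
Your proposal is correct and follows essentially the same route as the paper, which simply observes that the natural block-wise action of \((d\mathcal{B}_{n,1})^m\) on \((\mathfrak{C}_n^d)^m\cong\mathfrak{C}_n^{md}\) lands in \((md)\mathcal{B}_{n,1}\) and then invokes Theorem~\ref{autnV}. The extra details you supply --- the categorical product transducer and the lemma that synchronization passes from a non-degenerate synchronizing transducer to the minimal transducer of any of its state maps --- are exactly the ingredients the paper leaves implicit (compare the argument in Corollary~\ref{sync_monoid}).
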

\begin{proof}
If \((d\mathcal{B}_n)^m\) acts on \((\mathfrak{C}_n^d)^m\cong\mathfrak{C}_n^{dm}\) in the natural fashion, then these homeomorphisms are contained in the group \((md)\mathcal{B}_n\).
\end{proof}
\begin{corollary}
The group \(\A(dV_n)\) is countably infinite.
\end{corollary}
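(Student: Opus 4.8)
The plan is to work through the isomorphism \(\A(dV_n)\cong d\mathcal{B}_{n,1}\) of Theorem~\ref{autnV} and to establish the two halves of the statement separately: that \(d\mathcal{B}_{n,1}\) is infinite, and that it is at most countable. For the upper bound I would exploit the short exact sequence
\[1\longrightarrow dV_n\longrightarrow d\mathcal{B}_{n,1}\longrightarrow d\mathcal{O}_{n,1}\longrightarrow 1\]
supplied by Lemma~\ref{nV_is_normal}. Since an extension of a countable group by a countable group is again countable (as a set, \(d\mathcal{B}_{n,1}\) is a union of \(|d\mathcal{O}_{n,1}|\) cosets each of size \(|dV_n|\)), it suffices to check that both \(dV_n\) and \(d\mathcal{O}_{n,1}\) are countable.

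The kernel \(dV_n\) is finitely generated, as recorded in the introduction, and hence countable. For the quotient, recall that every element of \(d\mathcal{O}_{n,1}\) is a strong-isomorphism class \([\C(M_f)]_{\cong_S}\) of the core of some \(M_f\), and that such cores are always finite. So I would bound the number of finite \((d,n)\)-transducers up to strong isomorphism. The key point is that the common domain and range \((X_n^*)^d\) is a finitely generated monoid, generated by the \(nd\) words carrying a single letter in one coordinate and \(\varepsilon\) elsewhere; since the transition function is a monoid action and the output function satisfies the cocycle identity, a transducer on a fixed finite state set is completely determined by the values of these two functions on this finite generating set. Each transition value lies in a finite set of states and each output value lies in the countable set \((X_n^*)^d\), so for each fixed number of states there are only countably many such transducers, and hence only countably many finite cores up to strong isomorphism in total. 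Thus \(d\mathcal{O}_{n,1}\) is countable, and the exact sequence gives that \(d\mathcal{B}_{n,1}\), and therefore \(\A(dV_n)\), is countable.

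For infiniteness I would simply observe that \(dV_n\) sits inside \(d\mathcal{B}_{n,1}\) as the kernel in the sequence above, and that \(dV_n\) is infinite: for instance Remark~\ref{anti-chain sizes} produces complete prefix codes of every size in \(1+(n-1)\N\), which yields infinitely many distinct prefix exchange maps. Hence \(\A(dV_n)\cong d\mathcal{B}_{n,1}\) is both infinite and countable, i.e. countably infinite.

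The hard part is the countability of \(d\mathcal{O}_{n,1}\): one must be careful that finiteness of the core, together with finite generation of \((X_n^*)^d\) and the algebraic constraints on \(\pi\) and \(\lambda\), genuinely reduces each class to finitely much combinatorial data drawn from countable sets. Everything else is bookkeeping, since the representation of automorphisms by synchronizing transducers and the extension structure have already been assembled in the preceding results.
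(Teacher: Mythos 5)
Your proposal is correct and follows essentially the same route as the paper: the paper's own proof simply notes that \(dV_n\) is countably infinite and that \(d\mathcal{O}_{n,1}\) is countable because it consists of isomorphism classes of finite objects, which is exactly your extension argument via Lemma~\ref{nV_is_normal} with the countability of the quotient spelled out in more detail. Your extra care in bounding the number of finite cores up to strong isomorphism is a reasonable elaboration of what the paper leaves implicit, so there is nothing further to add.
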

\begin{proof}
The group \(dV_n\) is countably infinite and the group \(d\mathcal{O}_{n,1}\) is countable (it consists of the isomorphism classes of finite things).
\end{proof}
\section{A closer look at the groups \(d\mathcal{O}_{n,1}\)}
We now want to pin down what the core transducers representing \(d\mathcal{O}_{n,1}\) look like. We're going to end up with a semidirect product, so we'll deal with the acting part of the product first. Before that we introduce a notation which we will use repeatedly throughout this section.
\begin{defn}
We define
\[F_{d, n, S}:=\makeset{w\in (X_n^*)^d}{\( w\boldsymbol{\pi}_i = \varepsilon\) for all \(i\in \overline{d}\backslash S\)}\] 
That is \(F_{d, n, S}\) is the submonoid of \((X_n^*)^d\), consisting of those elements only allowed to be non-trivial in the coordinates in \(S\).
\end{defn}
\begin{lemma}\label{notinjlemma}
Let \(T\) be a transducer representing an element of \(\widetilde{d\mathcal{O}_{n,1}}\). If \(i\in \overline{d}\) then there is a unique \((i)\psi_T \in \overline{d}\), such that for all \(q\in Q_T\) and \(l\in F_{d, n, \{i\}}\), we have \((q, l)\lambda_T\in F_{d, n , \{(i)\psi_T\}}\).
\end{lemma}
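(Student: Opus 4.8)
The plan is to read off from the underlying homeomorphism a $d\times d$ ``dependency matrix'', and to show -- using injectivity on one side and finiteness of the core on the other -- that this matrix is a permutation matrix. The coordinate $(i)\psi_T$ will then be the image of $i$ under the associated permutation.

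First I would fix a representative $T\cong_S\C(M_f)$ with $f\in d\mathcal{S}_{n,1}$, so that every state of $T$ is a core state and, by Remark~\ref{inj_clo}, each $g_q:=f_{T,q}$ is injective with clopen image. For a state $q$ and $i,j\in\overline{d}$ I declare that \emph{output coordinate $j$ depends on input coordinate $i$} if there are $x,x'\in\mathfrak{C}_n^d$ differing only in coordinate $i$ with $(x)g_q\boldsymbol{\pi}_j\neq(x')g_q\boldsymbol{\pi}_j$, and I record this as a $0/1$ entry $A^q_{ij}$. The link to the statement is the elementary observation that if output coordinate $j$ does \emph{not} depend on input coordinate $i$, then restricting coordinate $i$ to any prefix cannot lengthen the common prefix of the $j$-th output over the corresponding slab, so $(q,l)\lambda_T\boldsymbol{\pi}_j=\varepsilon$ for every $l\in F_{d,n,\{i\}}$. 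Hence it suffices to prove that each row of $A^q$ has exactly one nonzero entry and that its position is independent of $q$.

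Next I would bound the matrix from both sides. Each row is nonzero: if input coordinate $i$ influenced no output coordinate, then $g_q$ would identify any two points differing only in coordinate $i$, contradicting injectivity. The harder direction, and what I expect to be the \textbf{main obstacle}, is that each column has at most one nonzero entry, i.e.\ no output coordinate depends on two distinct input coordinates. This is exactly where finiteness of the core enters: if output coordinate $j$ genuinely depended on both input coordinates $i$ and $i'$, then feeding long words in coordinate $i$ alone would repeatedly stall the $j$-th output -- it cannot be committed until the missing letters of coordinate $i'$ arrive -- while forcing ever more pending data to be remembered. This produces infinitely many pairwise inequivalent residual maps $f_{T,(q,l)\pi_T}$, all of which lie in the core since the core is closed under the transition function; as $\C(T)$ is finite this is absurd. (A short recurrence argument within the finite core is needed to upgrade any dependence to an unbounded one, so that even a single disagreement triggers the stalling; this is the delicate point.) Counting the $1$'s now forces a permutation matrix, since there are at least $d$ of them (one per row) and at most $d$ (one per column), so every row and column sum equals $1$.

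Finally, the permutation $\psi_q$ read off from the rows of $A^q$ gives $(q,l)\lambda_T\in F_{d,n,\{(i)\psi_q\}}$ for all $l\in F_{d,n,\{i\}}$. To see that $\psi_q$ is independent of $q$, I would use that each $g_q$ is obtained from $f$ by pre- and post-composing with prefix replacements, which act coordinatewise and never permute coordinates, so $\psi_q=\psi_f$ for every core state $q$; alternatively, recurrence of the finite core together with synchronisation shows directly that the coordinate receiving unbounded output from a long read in coordinate $i$ is the same from every core state. Setting $(i)\psi_T:=(i)\psi_q$ then yields the desired coordinate, and uniqueness is immediate since $(i)\psi_T$ is characterised as the unique coordinate in which some read in coordinate $i$ produces nonempty output.
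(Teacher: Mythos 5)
Your proposal inverts the quantifiers relative to what the paper actually proves, and the step you yourself flag as ``the delicate point'' is where the argument breaks. The lemma is a statement about \emph{rows} of your dependency matrix: each input coordinate writes into at most one output coordinate. The paper proves this directly: if reading coordinate $i$ could write into two distinct output coordinates $\alpha\neq\beta$ (from possibly different states $q_0,q_1$ --- note the statement requires one target coordinate uniform over all states, which is why the paper starts with two states), then the $d$ output coordinates are all reachable by reading from at most $d-1$ input coordinates, so some input coordinate $b$ is never needed; one then builds words $s_m$ whose $b$-coordinate is bounded but whose output is at least $m$ long in every coordinate, whence infinitely many points of $(X_n^\omega)^d$ share an image under $f_{T,q}$, contradicting Remark~\ref{inj_clo}. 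You instead try to prove the \emph{column} condition (no output coordinate depends on two inputs) by a stalling/finite-memory argument and recover the row condition by counting. But a single functional dependence of output $j$ on a second input $i'$ does not force unbounded pending data: output $j$ could draw only boundedly much information from $i'$ (say its first letter) and thereafter follow $i$, or draw boundedly much from $i$ and then switch to $i'$ while the rest of input $i$ is flushed to some other output coordinate $k$. In such configurations reading long words in one coordinate need not produce infinitely many inequivalent residual states, so finiteness of $\C(T)$ yields no contradiction; ruling these configurations out is exactly the content of the row condition, so the counting step becomes circular. The ``spare coordinate $b$'' idea, which is the engine of the paper's proof, is absent from your proposal and I do not see how to complete your column argument without it.

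Two smaller points. First, your reduction of the lemma to the functional dependency matrix is fine in one direction (non-dependence forces $(q,l)\lambda_T\boldsymbol{\pi}_j=\varepsilon$ because $T$ has complete response), and your injectivity argument that every row is nonzero is correct and is arguably cleaner than the paper's treatment of uniqueness (which instead uses non-degeneracy: if coordinate $i$ never wrote anything, the remaining $d-1$ values $(j)\psi_T$ could not cover all $d$ output coordinates). Second, your argument that $\psi_q$ is independent of $q$ is under-justified: synchronization only identifies $(q_1,w)\pi_T$ with $(q_2,w)\pi_T$ for $w$ long \emph{in every coordinate}, so it says nothing directly about words in $F_{d,n,\{i\}}$, and the appeal to ``pre- and post-composition with prefix replacements'' does not obviously control the local dependency structure at an arbitrary core state. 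The paper sidesteps this entirely by phrasing the contradiction hypothesis with two independent states from the outset.
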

\begin{proof}
We start by showing the existence of \((i)\psi_T\). First note that for all \(q\in Q_T\), the map \(f_{T, q}\) is necessarily injective (Remark~\ref{inj_clo}).
Suppose for a contradiction that there is \(i\in \overline{d}\), \(l_0, l_1\in F_{d,n,\{i\}}\), \(q_0,q_1\in Q_T\) and \(\alpha, \beta\in \overline{d}\) such that \(\alpha\neq \beta\), \(|(q_0, l_0)\lambda_T\boldsymbol{\pi}_{\alpha}|>0\) and \(|(q_1, l_1)\lambda_T\boldsymbol{\pi}_{\beta}|>0\). We may assume without loss of generality that \(\alpha = 0\) and \(\beta = 1\).

For all \(j\in \overline{d}\backslash \{0, 1\}\) let \(q_j\in Q_T\), \(i_j\in \overline{d}\) and \(l_j\in F_{d,n,\{i_j\}}\) be such that \(|(q_j, l_j)\lambda_T\boldsymbol{\pi}_j|>0\) (note these must exist as \(T\) is non-degenerate and the \(F\) sets generate \((X_n^*)^d\)). Also let \(i_0=i_1 = i\).

It is now the case that if we read a word in coordinate \(i_j\), it's possible to write in coordinate \(j\) (if we're in the correct state). Moreover \(i_{0}=i_1\), so there is a coordinate \(b\) such that we can write words in any given coordinate without reading from coordinate \(b\). We will use this observation to contradict injectivity.

For each state \(q_j\) we choose some \(w_j\in (q_j)\mathfrak{s}_T^{-1}\). If we read \(w_jl_j\) from anywhere we will write non-trivially into the coordinate \(j\). Consider \(w_j\) as \(w_{j,b}w_{j}'\) where \(w_{j,b}\in F_{d, n, \{b\}}\) and \(w_j'\in F_{d, n, \overline{d}\backslash\{b\}}\).

Consider the elements 
\[s_m:= w_{0, b}w_{1, b}\ldots w_{d-1, b}(w_{0}'l_0w_{1}'l_1\ldots w_{d-1}'l_{d-1})^m\in (X_n^*)^d.\]
As the \(w_{j,b}\) type elements commute will all other kind of elements in the product defining of \(s_m\), by commuting the words so that \(w_{j, b}(w_{0}'l_0w_{1}'l_1\ldots w_{d-1}'l_{d-1})^m\) is a part of the product defining \(s_m\), it follows that for all \(q\in Q_T\) we have
\[|(q, s_m)\lambda_T\pi_j|\geq m\]
for all \(j\).

Thus all elements of \((X_n^\omega)^d\) which have all the \(s_m\) as prefixes have the same image under \(f_{T, q}\). This is a contradiction as there are infinitely many such elements and \(f_{T, q}\) is injective.

It remains to show the uniqueness of \((i)\psi_T\). The only way \((i)\psi_T\) could be non-unique is if \(T\) never writes anything when reading from coordinate \(i\). In this case it follows from the existence of the other \((j)\psi_T\), that there is some coordinate into which \(T\) never writes, which is impossible as \(T\) is non-degenerate.
\end{proof}
\begin{defn}
If \(T\) is a transducer representing an element of \(\widetilde{d\mathcal{O}_{n,1}}\), then we define \(\psi_T:\overline{d} \to \overline{d}\) to be the map which was shown to be well-defined in Lemma~\ref{notinjlemma}.
\end{defn}

\begin{theorem}\label{first_semidirect}
The group \(d\mathcal{O}_{n, 1}\) is isomorphic to \(d\mathcal{K}_{n, 1}\rtimes S_d\), where \(S_d\) acts by permuting the coordinates of \(\mathfrak{C}_n^d\) and \(d\mathcal{K}_{n, 1}= \makeset{[T]_{\cong_S}\in d\mathcal{O}_{n, 1}}{\(\psi_T=id\)}\).
\end{theorem}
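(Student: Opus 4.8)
The plan is to realise the coordinate-permutation data $\psi$ of Lemma~\ref{notinjlemma} as a group homomorphism $\Psi\colon d\mathcal{O}_{n,1}\to S_d$, to exhibit an explicit splitting given by the coordinate-permuting homeomorphisms, and then to invoke the splitting lemma. First I would check that $\psi_T$ depends only on the strong isomorphism class $[T]_{\cong_S}$, so that Lemma~\ref{notinjlemma} yields a well-defined map $\Psi\colon\widetilde{d\mathcal{O}_{n,1}}\to(\overline{d}\to\overline{d})$ sending $[T]_{\cong_S}$ to $\psi_T$. This is immediate, since a strong isomorphism preserves $\lambda$ and the ambient monoid $(X_n^*)^d$, and hence preserves the defining condition ``$(q,l)\lambda_T\in F_{d,n,\{(i)\psi_T\}}$ for all $l\in F_{d,n,\{i\}}$''.

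The central step is to show that $\Psi$ is a homomorphism into the monoid of self-maps of $\overline{d}$ under (left-to-right) composition. Given $f,g\in d\mathcal{S}_{n,1}$, I would compute $\psi$ directly on the composite transducer $\C(M_f)\C(M_g)$, which is synchronizing and non-degenerate by Remark~\ref{exp_remark}. Reading a letter $l\in F_{d,n,\{i\}}$ from any state, the first factor $\C(M_f)$ writes an element of $F_{d,n,\{(i)\psi_{M_f}\}}$, which the second factor $\C(M_g)$ then reads and turns into an element of $F_{d,n,\{(i)\psi_{M_f}\psi_{M_g}\}}$. By the uniqueness clause of Lemma~\ref{notinjlemma}, this forces $\psi_{\C(M_f)\C(M_g)}=\psi_{M_f}\psi_{M_g}$; and since $\C(M_{fg})$ is obtained from $\C(M_f)\C(M_g)$ by removing incomplete response, identifying equivalent states, and passing to the core (as in Corollary~\ref{sync_monoid}), each of which preserves the output-coordinate structure, we get $\psi_{M_{fg}}=\psi_{M_f}\psi_{M_g}$. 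Restricting $\Psi$ to the group of units $d\mathcal{O}_{n,1}$, every image is invertible in $(\overline{d}\to\overline{d})$, hence a bijection, so $\Psi\colon d\mathcal{O}_{n,1}\to S_d$ is a group homomorphism whose kernel is exactly $d\mathcal{K}_{n,1}$ by definition.

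To split $\Psi$ I would use the coordinate-permutation homeomorphisms: for $\sigma\in S_d$ let $p_\sigma\in H(\mathfrak{C}_n^d)$ be defined by $((x)p_\sigma)\boldsymbol{\pi}_j=(x)\boldsymbol{\pi}_{(j)\sigma^{-1}}$. Each $p_\sigma$ is represented by the single-state transducer that relabels coordinate $i$ to coordinate $(i)\sigma$, so $M_{p_\sigma}$ is synchronizing and $p_\sigma\in d\mathcal{B}_{n,1}$ with inverse $p_{\sigma^{-1}}$; a direct coordinate check gives $p_\sigma p_\tau=p_{\sigma\tau}$ and $\psi_{M_{p_\sigma}}=\sigma$. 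Hence $s\colon S_d\to d\mathcal{O}_{n,1}$, $s(\sigma)=[\C(M_{p_\sigma})]_{\cong_S}$, is a homomorphism with $\Psi\circ s=\mathrm{id}_{S_d}$, so $\Psi$ is a split surjection. The splitting lemma then gives $d\mathcal{O}_{n,1}\cong\ker\Psi\rtimes s(S_d)=d\mathcal{K}_{n,1}\rtimes S_d$, with $S_d$ acting by conjugation by the elements $[\C(M_{p_\sigma})]_{\cong_S}$; since conjugating by $p_\sigma$ permutes the $d$ coordinates of $\mathfrak{C}_n^d$ according to $\sigma$, this is the standard permutation action, as claimed.

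The main obstacle I expect is the homomorphism step: one must verify that $\psi$ really is invariant under the minimisation and coring process relating $\C(M_f)\C(M_g)$ to $\C(M_{fg})$, and that the composition of coordinate relabellings is tracked in the correct (left-to-right) order, so that $\Psi$ lands in $S_d$ as a genuine homomorphism rather than an anti-homomorphism. The remaining verifications --- well-definedness on $\cong_S$-classes, the membership $p_\sigma\in d\mathcal{B}_{n,1}$, and $\psi_{M_{p_\sigma}}=\sigma$ --- are routine once the output-coordinate bookkeeping of Lemma~\ref{notinjlemma} is in hand.
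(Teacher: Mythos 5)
Your proposal is correct and follows essentially the same route as the paper: the paper's proof likewise observes that \([T]_{\cong_S}\mapsto\psi_T\) is a monoid homomorphism into the full transformation monoid on \(\overline{d}\) (hence lands in \(S_d\) on the group \(d\mathcal{O}_{n,1}\)), and splits it using the coordinate-permuting homeomorphisms. You simply spell out more of the bookkeeping (invariance of \(\psi\) under minimisation and coring, and the left-to-right composition order) that the paper leaves as an assertion.
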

\begin{proof}
Note that the map \([T]_{\cong_S} \to \psi_{T}\) is a monoid homomorphism to the full transformation monoid on \(d\) points. As \(d\mathcal{O}_{n, 1}\) is a group, it follows that \(\psi_{T}\) is always a permutation for \([T]_{\cong}\in d\mathcal{O}_{n, 1}\). To see that the map is onto the symmetric group and the extension splits, note that for an arbitrary \(f\in S_d\) the map 
\[(p_0,p_1, \ldots, p_{d-1}) \xrightarrow[]{h} (p_{(0)f},p_{(1)f}, \ldots, p_{(d-1)f})\]
is an element of \(d\mathcal{B}_{n, 1}\). Moreover \([\C(M_h)]_{\cong_S}\) maps to \(f\) under the homomorphism.
\end{proof}
We next need to understand the group \(d\mathcal{K}_{n, 1}\). To this end, we recall the groups \(\mathcal{O}_{n, n-1}\) of \cite{bleak2016}. These are groups of synchronizing core \((1,n)\)-transducers, which are isomorphic to the outer automorphism groups of \(G_{n, n-1}\).

In \cite{bleak2016}, it was shown that \(\mathcal{O}_{n, n-1}\) contains \(\mathcal{O}_{n, j}\) for all \(j\), and that a \((1,n)\)-transducer represents an element of \(\mathcal{O}_{n, n-1}\) if and only if it is minimal (in the sense of GNS), synchronizing, it is its own core, all it's states are injective, all its states have clopen image and it's invertible. In particular \(1\mathcal{O}_{n, 1}=\mathcal{O}_{n, 1}\) is a subgroup of \(\mathcal{O}_{n, n-1}\).
\begin{theorem}\label{product_decomposition}
If \([T]_{\cong_S}\in d\mathcal{K}_{n, 1}\) then there are \(T_0, T_1, \ldots T_{d-1}\in \mathcal{O}_{n, n-1}\) such that
\[T \cong_S \prod_{ i\in \overline{d}}T_{i}.\]
\end{theorem}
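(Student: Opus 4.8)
The plan is to exploit the hypothesis $\psi_T = \mathrm{id}$ to uncouple the coordinates of $T$. By Lemma~\ref{notinjlemma}, reading an element of $F_{d,n,\{i\}}$ writes only into coordinate $i$. The first thing I would record is a pair of commutation facts, both essentially free from the monoid-action axiom. Since elements of $F_{d,n,\{i\}}$ and $F_{d,n,\{j\}}$ with $i\neq j$ commute in $(X_n^*)^d$ and $\pi_T$ is a monoid action, transitions automatically commute; decomposing a general input as $w=a_0a_1\cdots a_{d-1}$ with $a_i\in F_{d,n,\{i\}}$ carrying $w\boldsymbol{\pi}_i$ and reordering the product, one obtains (i) $(q,w)\lambda_T\boldsymbol{\pi}_i$ depends only on $q$ and $w\boldsymbol{\pi}_i$ (the remaining factors write outside coordinate $i$ because $\psi_T=\mathrm{id}$), and (ii) reading any element of $F_{d,n,\{j\}}$ with $j\neq i$ fixes the coordinate-$i$ behaviour of a state. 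These two facts are the engine of the whole argument.

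Next I would define, for each $i$, the relation $p\sim_i q$ iff $(p,l)\lambda_T\boldsymbol{\pi}_i=(q,l)\lambda_T\boldsymbol{\pi}_i$ for all $l\in F_{d,n,\{i\}}$, and build a $(1,n)$-transducer $T_i$ on state set $Q_T/\sim_i$ (identifying $F_{d,n,\{i\}}$ with $X_n^*$), with transition $[(q,a)\pi_T]_{\sim_i}$ and output $(q,a)\lambda_T\boldsymbol{\pi}_i$, where $a\in F_{d,n,\{i\}}$ carries the given input. Well-definedness of the transition uses cancellativity of $X_n^*$ together with fact (i); the content is exactly that $\sim_i$ is a congruence for coordinate-$i$ reading. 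I would then check that $\Phi\colon Q_T\to\prod_i Q_T/\sim_i$, $q\mapsto([q]_{\sim_0},\dots,[q]_{\sim_{d-1}})$, together with the identity on domain and range, is a strong transducer homomorphism $T\to\prod_i T_i$: the transition clause reduces, via fact (ii), to the statement that reading coordinates $\neq i$ does not move the $\sim_i$-class, and the output clause is fact (i).

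The crux is that $\Phi$ is a bijection. Injectivity is short: if $[p]_{\sim_i}=[q]_{\sim_i}$ for all $i$ then, by fact (i), $p$ and $q$ have identical output on every $w$, so they are equal since a core transducer of a minimal transducer has no distinct equivalent states. For surjectivity I would first show each $T_i$ is synchronizing at the synchronizing length $k$ of $T$: reading a rectangular word $(u,v,\dots,v)$, with $u$ in coordinate $i$ and all lengths $k$, lands in a state independent of the start by synchronization of $T$, and passing to $\sim_i$-classes, fact (ii) shows this class equals $([p]_{\sim_i},u)\pi_{T_i}$ independently of $[p]_{\sim_i}$. The same computation shows each $T_i$ is its own core and that $[(w)\mathfrak{s}_T]_{\sim_i}=(w\boldsymbol{\pi}_i)\mathfrak{s}_{T_i}$. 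Hence, given any target tuple of core classes $((u_i)\mathfrak{s}_{T_i})_i$, the state $(u_0,\dots,u_{d-1})\mathfrak{s}_T$ maps onto it under $\Phi$.

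It remains to place each $T_i$ in $\mathcal{O}_{n,n-1}$, and this is where I expect the real work. Synchronization, being its own core, minimality (immediate from the definition of $\sim_i$), and non-degeneracy are already in hand. For injectivity and clopen image of the states I would pass to the infinite picture: fact (i) upgrades to $f_{T,q}=\prod_i f_{T_i,[q]_{\sim_i}}$ as a map of $\mathfrak{C}_n^d=\prod_i\mathfrak{C}_n$, and since $f_{T,q}$ is injective with clopen image (Remark~\ref{inj_clo}), each factor inherits injectivity and clopen image, because a finite product map has these properties iff every factor does. The genuinely delicate point is invertibility of the $T_i$. Since $\psi$ is a homomorphism to $S_d$, the inverse class $[\C(M_{f^{-1}})]$ again lies in $d\mathcal{K}_{n,1}$ and so decomposes as $\prod_i T_i'$; using that composition distributes over products, $(\prod_i T_i)(\prod_i T_i')\cong_S\prod_i(T_iT_i')$, and that core-extraction distributes over products of synchronizing transducers, the single-identity-state core of $M_{ff^{-1}}$ forces each $\C(T_iT_i')$ to be a single identity state, so $T_i$ is invertible with inverse $T_i'$. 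The main obstacle is thus organizing this last step, namely establishing cleanly that products interchange with both composition and core-extraction, so as to conclude $T_i\in\mathcal{O}_{n,n-1}$.
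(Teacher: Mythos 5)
Your proposal is correct and follows the same strategy as the paper: use \(\psi_T=\mathrm{id}\) together with the commutativity of the submonoids \(F_{d,n,\{i\}}\) to decouple the coordinates, extract from each coordinate a transducer in \(\mathcal{O}_{n,n-1}\), and show the canonical comparison map into the product is a strong isomorphism (injective by minimality of \(T\), surjective by synchronization). The execution differs in one structural choice: the paper defines \(\sim_i\) as \emph{reachability} by coordinate-\(i\) reading and realises \(T_i\) as a subtransducer \(S_{q,i}\) on a single reachability class (checking that \(q\mapsto(q,w)\pi_T\) for \(w\in F_{d,n,\overline{d}\setminus\{i\}}\) gives isomorphisms between these subtransducers), whereas you define \(\sim_i\) by coordinate-\(i\) \emph{output equivalence} and realise \(T_i\) as the quotient \(Q_T/\sim_i\); the two yield strongly isomorphic \(T_i\), and your version makes minimality of \(T_i\) and injectivity of the comparison map slightly more immediate, at the cost of a well-definedness check (which you correctly reduce to cancellativity plus your fact (i)). The one place where you go beyond the paper is invertibility of the \(T_i\): the paper cites the characterisation of \(\mathcal{O}_{n,n-1}\) from Bleak--Cameron--Maissel--Navas--Olukoya, which includes invertibility among its conditions, but its proof never verifies that condition. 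Your argument --- decompose \(\C(M_{f^{-1}})\) the same way (using that \([T]\mapsto\psi_T\) is a homomorphism, so the inverse class also lies in \(d\mathcal{K}_{n,1}\)) and use that categorical products commute with composition and core-extraction to force each \(\C(T_iT_i')\) to be the trivial transducer --- is sound and fills this in; the interchange facts you flag as the remaining organisational work are routine from the definitions. Minor omission on your side: you do not explicitly note that each \(T_i\) has complete response, but this falls out of your identity \(f_{T,q}=\prod_i f_{T_i,[q]_{\sim_i}}\) since the longest common prefix of a product of images is the tuple of coordinate-wise longest common prefixes.
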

\begin{proof}
For each \(i\in \overline{d},\) let
\[\sim_{i}:= \makeset{(p, q)\in Q_T^2}{ there is \(w\in F_{d, n, \{i\}}\) with \((p, w)\pi_T=q\)}.\]
One can check that each \(\sim_{i}\) is an equivalence relation. If \(q\in Q_T\), \(i\in \overline{d}\), we restrict the domain and range of \(T\) to \(F_{d, n, \{i\}}\) and restrict the state set of \(T\) to \([q]_{\sim_i}\), then we obtain a subtransducer \(S_{q, i}\) of \(T\).

Moreover for all \(w\in F_{d, n, \overline{d}\backslash \{i\}}\), one can check that the map \(q\mapsto (q, w)\pi_T\) is a strong transducer isomorphism from \(S_{q, i}\) to \(S_{(q,w)\pi_T, i}\).

We will show that \(S_{q, i}\in \mathcal{O}_{n,n-1}\) (if we make the natural identification between \(F_{d, n, \{i\}}\) and \(X_n^*\)).
It suffices to check that the conditions given in \cite{bleak2016} are satisfied. The transducer \(S_{q, i}\) has no inaccessible states by the definition of \(\sim_i\), it has complete response because \(T\) has complete response, it's synchonizing and it's own core because \(T\) is and it has injective state functions because \(T\) does (Remark~\ref{inj_clo}).
By Remark~\ref{inj_clo}, each state function \(f_{T, q}\) of \(T\) has clopen image.
As the image of a state function \(f_{S_{q, i}, p}\)of \(S_{q, i}\) is a projection of the image of the corresponding state function \(f_{T, p}\) of \(T\) (because \(\psi_T\) is well-defined), it follows that this image of \(f_{T, p}\) is compact and open (hence clopen).
It remains to show that \(S_{q, i}\) has no distinct but equivalent states. As \(T\) has no distinct but equivalent states, it suffices to show that if \(p_0, p_1\) are equivalent states in \(S_{q, i}\), then they are equivalent in \(T\).
Let \(j\in \overline{d}\), and \(w\in F_{d, n, \{j\}}\). It suffices to show that \((p_0, w)\lambda_{T}= (p_1, w)\lambda_{T}\).
If \(j= i\) then this follows by the assumption on \(p_0, p_1\). Otherwise let \(s\in F_{d, n, \{i\}}\) be such that \((p_0, s)\pi_T= p_1\).
Then \[(p_0, w)\lambda_{T}\boldsymbol{\pi}_j=(p_0, ws)\lambda_{T}\boldsymbol{\pi}_j=(p_0, sw)\lambda_{T}\boldsymbol{\pi}_j=((p_0, s)\lambda_{T}(p_1, w)\lambda_{T})\boldsymbol{\pi}_j=(p_1, w)\lambda_{T}\boldsymbol{\pi}_j\]
as required. So we can conclude that \(S_{q, i}\in \mathcal{O}_{n, n-1}\) (if we make the natural identification between \(F_{d, n, \{i\}}\) and \(X_n^*\)).

For each \(i\in \overline{d},\) let \(S_{q, i}\) be isomorphic to \(T_{i}\in \mathcal{O}_{n, n-1}\) via an isomorphism which uses \(\boldsymbol{\pi}_i\) as the domain and range isomorphisms (recall that \(q\) has no affect on the isomorphism type). Moreover let \(\phi_{q, i}:S_{q, i} \to T_i\) be the unique such transducer isomorphism (this is unique as the image of an arbitrary state is determined by any one of its synchronizing words). Let \(\phi_{i}:T\to T_i\) be the transducer homomorphism with
\[{\phi_i}_D={\phi_i}_T=\boldsymbol{\pi}_i,\quad {\phi_{i}}_Q=\bigcup_{q\in Q_T}{\phi_{q,i}}_Q.\]

We then define \(\phi:T \to \prod_{ i\in \overline{d}}T_{i}\) to be the unique transducer homomorphism such that for all \(i\in \overline{d}\) we have
\[\phi\boldsymbol{\pi}_i=\phi_i.\]

We need to show that \(\phi\) is a strong isomorphism. We have \(\phi_D=\phi_R=id\) by definition, so we need only show that \(\phi_Q\) is a bijection. It must be surjective as it's target is synchronizing, and so each state is the image of the state reached in \(T\) by reading one of it's synchronizing words. Injectivity is also immediate as \(T\) was assumed to be minimal and hence has no proper strong quotients.
\end{proof}
It is routine to check that the multiplication in \(d\mathcal{K}_{n, 1}\) is also compatible with the multiplication in \(\mathcal{O}_{n, n-1}\), so we can make the following definition:
\begin{defn}\label{alpha}
We define an embedding \(\alpha:d\mathcal{K}_{n, 1}\to \mathcal{O}_{n,n-1}^d\) by
\[\prod_{i\in \overline{d}}([T]_{\cong_S} )\alpha\boldsymbol{\pi}_i \cong_{S } T.\]
\end{defn}
\begin{corollary}\label{wreath_embedding}
The group \(\Out(dV_n)\) embeds in the group
\(\mathcal{O}_{n, n-1}\mathlarger{\mathlarger{\mathlarger{\wr}}} S_d.\)
\end{corollary}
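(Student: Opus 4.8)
The plan is to assemble the embedding from the structural results already in hand, the one genuinely new point being that the embedding \(\alpha\) of Definition~\ref{alpha} intertwines the two relevant \(S_d\)-actions. First I would recall that \(\Out(dV_n)\cong d\mathcal{O}_{n,1}\) by Corollary~\ref{outrubin}, and that \(d\mathcal{O}_{n,1}\cong d\mathcal{K}_{n,1}\rtimes S_d\) by Theorem~\ref{first_semidirect}, where the copy of \(S_d\) is realised by the coordinate-permutation homeomorphisms \(h_f\colon (p_i)_{i\in\overline{d}}\mapsto(p_{(i)f})_{i\in\overline{d}}\) appearing in the proof of that theorem. Since by definition \(\mathcal{O}_{n,n-1}\wr S_d=\mathcal{O}_{n,n-1}^d\rtimes S_d\) with \(S_d\) permuting the \(d\) coordinates, it suffices to upgrade the embedding \(\alpha\colon d\mathcal{K}_{n,1}\to\mathcal{O}_{n,n-1}^d\) of Definition~\ref{alpha} to a map of semidirect products \(\alpha\rtimes\mathrm{id}\colon d\mathcal{K}_{n,1}\rtimes S_d\to\mathcal{O}_{n,n-1}^d\rtimes S_d\) acting as the identity on the \(S_d\)-factor.

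Next I would check that \(\alpha\rtimes\mathrm{id}\) is a group homomorphism. As \(\alpha\) is already an injective homomorphism and the second factor is the identity, the only point to verify is compatibility of the two semidirect structures, that is, the equivariance
\[\bigl([\C(M_{h_f})]_{\cong_S}^{-1}\,[T]_{\cong_S}\,[\C(M_{h_f})]_{\cong_S}\bigr)\alpha=\bigl([T]_{\cong_S}\alpha\bigr)^{f}\]
for all \([T]_{\cong_S}\in d\mathcal{K}_{n,1}\) and \(f\in S_d\), where on the right \(f\) permutes the \(d\) coordinates of \(\mathcal{O}_{n,n-1}^d\). Injectivity of \(\alpha\rtimes\mathrm{id}\) is then immediate, since it is injective on each factor and respects the decomposition; combined with Corollary~\ref{outrubin} and Theorem~\ref{first_semidirect} this gives the embedding \(\Out(dV_n)\hookrightarrow\mathcal{O}_{n,n-1}\wr S_d\).

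To prove the equivariance I would trace through the product decomposition of Theorem~\ref{product_decomposition}. Conjugating \(T\) by \(h_f\) relabels reading and writing in coordinate \(i\) as reading and writing in coordinate \((i)f\); concretely this carries the equivalence relation \(\sim_i\) and the subtransducer \(S_{q,i}\) used in that proof to \(\sim_{(i)f}\) and \(S_{q',(i)f}\) for an appropriate state \(q'\). Because \([T]_{\cong_S}\in d\mathcal{K}_{n,1}\) means \(\psi_T=\mathrm{id}\), the \(i\)-th factor \(T_i\) in \(T\cong_S\prod_{i\in\overline{d}}T_i\) is exactly the copy of \(\mathcal{O}_{n,n-1}\) governing coordinate \(i\); hence the decomposition of the conjugate has the same factors, merely reindexed by \(f\), which is precisely the coordinate-permutation action on \(\mathcal{O}_{n,n-1}^d\).

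The main obstacle is bookkeeping: keeping the direction of the permutation (\(f\) versus \(f^{-1}\)) consistent between the realisation of \(S_d\) inside \(d\mathcal{O}_{n,1}\) in Theorem~\ref{first_semidirect}, the left-to-right composition convention of the paper, and the standard coordinate action defining the wreath product. Once these conventions are fixed, the equivariance follows formally from the naturality under conjugation of the decomposition constructed in Theorem~\ref{product_decomposition}, and the corollary is complete.
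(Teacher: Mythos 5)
Your proposal is correct and follows the same route as the paper, which proves this corollary simply by citing Definition~\ref{alpha} (via Theorem~\ref{product_decomposition}), Theorem~\ref{first_semidirect} and Corollary~\ref{outrubin}. The equivariance of \(\alpha\) with respect to the two \(S_d\)-actions, which you rightly identify as the one point needing verification, is left implicit in the paper, so your write-up is if anything more careful than the original.
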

\begin{proof}
This follows from Definition~\ref{alpha}(Theorem~\ref{product_decomposition}), Theorem~\ref{first_semidirect} and Corollary~\ref{outrubin}.
\end{proof}
We now have a connection between \(d\mathcal{O}_{n, 1}\) and \(\mathcal{O}_{n, n-1}\). To pin this down precisely we recall the map \(\overline{\text{sig}}:\mathcal{O}_{n, n-1} \to (\mathbb{Z}/(n-1)\mathbb{Z}, \times)\) of \cite{olukoya2019automorphisms} Definition 7.6.

This group homomorphism takes an \(\mathcal{O}_{n, n-1}\) transducer to the unique element 
\((m + (n-1)\mathbb{Z})~\in~\mathbb{Z}/(n-1)\mathbb{Z}\), such that when a cone in read through the transducer, the transducer writes \(m\) disjoint cones. We can naturally use this map to define a new homomorphism from \(\mathcal{O}_{n, n - 1}^d \).
\begin{defn}
We define the homomorphism \(\overline{\text{sig}_d}:\mathcal{O}_{n, n - 1}^d \to (\mathbb{Z}/(n-1)\mathbb{Z}, \times)\) by
\[(\boldsymbol{T})\overline{\text{sig}_d}=\prod_{i\in \overline{d}} ((\boldsymbol{T})\boldsymbol{\pi}_i)\overline{\text{sig}}.\]
\end{defn}
Moreover we observe that this definition functions as one might expect:
\begin{lemma}
If \(\boldsymbol{T}\in \mathcal{O}_{n, n - 1}^d \), then \((\boldsymbol{T})\overline{\text{sig}_d}\) the unique element \((m + (n-1)\mathbb{Z})\) of \(\mathbb{Z}/(n-1)\mathbb{Z}\), such that \(m\) disjoint cones are written when a cone in read through \(\prod_{i\in \overline{d}}(\boldsymbol{T}\boldsymbol{\pi}_i)\).
\end{lemma}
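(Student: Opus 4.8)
The plan is to reduce everything to the coordinate-wise behaviour of the product transducer and then invoke the multiplicativity of cone counts under Cartesian products. Write $T_i := (\boldsymbol{T})\boldsymbol{\pi}_i \in \mathcal{O}_{n, n-1}$ and $P := \prod_{i\in\overline{d}} T_i$. First I would record that, directly from the definition of the product transducer (its output function is the coordinate-wise output), each state function of $P$ factors as
\[
f_{P, (q_i)_i}\big((x_i)_{i\in\overline{d}}\big) = \big(f_{T_i, q_i}(x_i)\big)_{i\in\overline{d}};
\]
this is obtained by reading prefixes and passing to the limit, exactly as in the one-coordinate case, and reduces the whole question to understanding how $P$ acts on a single cone.

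Next I would analyse the image of a cone. A cone in $\mathfrak{C}_n^d$ has the form $w\mathfrak{C}_n^d$ with $w = (w_i)_{i\in\overline{d}} \in (X_n^*)^d$, and by the factorisation above its image under $f_{P,(q_i)_i}$ is the Cartesian product $\prod_{i\in\overline{d}} U_i$, where $U_i$ is the image of the cone $w_i\mathfrak{C}_n$ under $f_{T_i, q_i}$. By Remark~\ref{inj_clo} each $U_i$ is clopen, hence a disjoint union of finitely many cones, say $U_i = \bigsqcup_{j} C_{i,j}$ with $m_i$ cones; by the definition of $\overline{\text{sig}}$ we have $m_i + (n-1)\mathbb{Z} = (T_i)\overline{\text{sig}}$.

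The key step is then to distribute the Cartesian product over these disjoint unions:
\[
\prod_{i\in\overline{d}} U_i = \bigsqcup_{(j_0,\ldots,j_{d-1})} \big(C_{0,j_0} \times \cdots \times C_{d-1, j_{d-1}}\big).
\]
Each factor $C_{i,j_i}$ is a cone $v_{i,j_i}\mathfrak{C}_n$, so each summand equals $(v_{0,j_0}, \ldots, v_{d-1,j_{d-1}})\mathfrak{C}_n^d$, a cone of $\mathfrak{C}_n^d$, and the summands are pairwise disjoint since the original cones were disjoint in each coordinate. Hence the image decomposes into exactly $\prod_{i\in\overline{d}} m_i$ disjoint cones, and reducing modulo $n-1$ (using that multiplication is well defined on $\mathbb{Z}/(n-1)\mathbb{Z}$) gives
\[
\Big(\prod_{i\in\overline{d}} m_i\Big) + (n-1)\mathbb{Z} = \prod_{i\in\overline{d}}\big((T_i)\overline{\text{sig}}\big) = (\boldsymbol{T})\overline{\text{sig}_d},
\]
which is the asserted value.

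Finally I would note that the residue is independent of the chosen cone and state: each $m_i \bmod (n-1)$ is already independent of such choices by the well-definedness of $\overline{\text{sig}}$, and Remark~\ref{anti-chain sizes} ensures that any two cone decompositions of the same clopen set have cone counts agreeing modulo $n-1$, so the overall count $\prod_i m_i$ is likewise well defined modulo $n-1$. The only point requiring a little care -- and the closest thing to an obstacle -- is verifying that a product of cones is genuinely a cone of $\mathfrak{C}_n^d$ and that distributing the product preserves disjointness; both follow immediately from the coordinate-wise definition of the prefix order and of cones on $\mathfrak{C}_n^d$, so I expect no real difficulty here.
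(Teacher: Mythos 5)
Your proposal is correct and follows essentially the same route as the paper's (much terser) proof: well-definedness via Remark~\ref{anti-chain sizes}, the identification of a $(d,n)$-cone with a product of $d$ $(1,n)$-cones, and the fact that the output of a product transducer is the coordinate-wise product of the outputs, so the cone count multiplies. Your write-up simply makes explicit the distribution of the Cartesian product over the disjoint unions, which the paper leaves to the reader.
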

\begin{proof}
This is well-defined by Remark~\ref{anti-chain sizes}. The result follows from the observations that a \((d, n)\) cone is the same as a product of \(d\) \((1, n)\) cones, and the set of words writable from a state in a product of transducers is the product of the sets of words which can be written in each coordinate.
\end{proof}
In Proposition 7.7 in \cite{olukoya2019automorphisms} and Theorem 9.5 of \cite{bleak2016}, the signature map has been used to identify the groups \(\mathcal{O}_{n, r}\) inside of \(\mathcal{O}_{n, n-1}\). We can now do the same with \(d\mathcal{K}_{n, 1}\) via a similar argument.
\begin{lemma}\label{lemma2}
If \(\boldsymbol{T} \in (\mathcal{O}_{n, n-1})^d\), then
\(P:=\prod_{i\in \overline{d}} \boldsymbol{T}\boldsymbol{\pi}_i\)
is strongly isomorphic to a transducer representing an element of \(d\mathcal{K}_{n, 1}\) if and only if \(\boldsymbol{T} \in \operatorname{ker}(\overline{\text{sig}_d})\).
\end{lemma}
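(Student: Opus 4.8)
The plan is to reduce both directions to a single bookkeeping invariant: the number of cones written when one \((d,n)\)-cone is passed through \(P\). Writing \((\boldsymbol{T}\boldsymbol{\pi}_i)\overline{\text{sig}} = m_i + (n-1)\mathbb{Z}\), the lemma preceding this one gives \((\boldsymbol{T})\overline{\text{sig}_d} = \prod_{i\in\overline{d}} m_i + (n-1)\mathbb{Z}\). The structural fact I would record first is that for any core state \(q = (q_i)_{i\in\overline{d}}\) of \(P\) the image \(\im(f_{P,q}) = \prod_{i\in\overline{d}} \im(f_{T_i,q_i})\) is a disjoint union of cones (each factor is clopen by Remark~\ref{inj_clo}), and its number of cones is \(\equiv \prod_{i\in\overline{d}} m_i \pmod{n-1}\): each coordinate image is a clopen subset of \(\mathfrak{C}_n\) built from \(\equiv m_i\) cones (this congruence is exactly what \(\overline{\text{sig}}\) records for the factor \(T_i\in\mathcal{O}_{n,n-1}\)), and a product of disjoint unions of cones, one per coordinate, is a disjoint union of \((d,n)\)-cones whose count is the product of the coordinate counts. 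Both implications then compare this count against the admissible sizes of complete prefix codes supplied by Remark~\ref{anti-chain sizes}.

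For necessity, suppose \(P \cong_S \C(M_f)\) for some \(f \in d\mathcal{B}_{n,1}\). I would choose a complete prefix code \(F\) of \(\mathfrak{C}_n^d\) deep enough that reading any \(w\in F\) from the start state of \(M_f\) lands in the core \(P\); then \(f\) restricted to \(w\mathfrak{C}_n^d\) is an output prefix followed by a core state function, so \(f(w\mathfrak{C}_n^d)\) is a disjoint union of \(\equiv \prod_{i} m_i \pmod{n-1}\) cones. Since \(f\) is a bijection these images are pairwise disjoint and cover \(\mathfrak{C}_n^d\), hence together they partition \(\mathfrak{C}_n^d\) into cones, i.e.\ form a complete prefix code, whose size is therefore \(\equiv 1 \pmod{n-1}\). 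Counting modulo \(n-1\) gives \(1 \equiv |F|\cdot\prod_{i} m_i\), and as \(F\) is itself a complete prefix code \(|F|\equiv 1\), whence \(\prod_{i} m_i \equiv 1 \pmod{n-1}\), i.e.\ \(\boldsymbol{T}\in\ker(\overline{\text{sig}_d})\).

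For sufficiency, suppose \(\boldsymbol{T}\in\ker(\overline{\text{sig}_d})\), so \(\prod_{i} m_i \equiv 1 \pmod{n-1}\), and build an \(f\in d\mathcal{B}_{n,1}\) realizing \(P\) as its core. Since each \(T_i\) is synchronizing and its own core, \(P\) is a synchronizing \((d,n)\)-transducer equal to its own core with \(\psi_P = \mathrm{id}\) by construction, so it suffices to complete \(P\) at the top to a genuine self-homeomorphism without disturbing its deep behaviour. I would do this in parallel with the \(d=1\) case of \cite{bleak2016} (Theorem 9.5) and \cite{olukoya2019automorphisms} (Proposition 7.7): take a synchronizing-depth complete prefix code of the domain, route each of its cones to a core state of \(P\), and use the invertibility of the factors \(T_i\in\mathcal{O}_{n,n-1}\) together with the count \(\prod_{i} m_i\equiv 1\pmod{n-1}\) to select output prefixes so that the resulting image cones assemble, via Remark~\ref{anti-chain sizes}, into a complete prefix code of the range. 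This yields a prefix-exchange-type bijection \(f\) whose residual maps below the chosen depth are exactly the state functions of \(P\); minimising \(M_f\) and passing to the core returns \(P\) up to \(\cong_S\), so \([P]_{\cong_S}\in d\mathcal{K}_{n,1}\).

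The main obstacle is the sufficiency construction, and specifically two checks inside it. First, the top-level completion must always be achievable: the individual coordinate images need not tile \(\mathfrak{C}_n\) (each carries only \(\equiv m_i\) cones, and \(m_i\) need not be \(1\)), so the gluing genuinely \emph{mixes} coordinates, and it is the product count \(\prod_{i} m_i\equiv 1\), not any single \(m_i\), that lets the range close up into a complete prefix code; arranging this packing is where the invertibility of the \(T_i\) and a careful choice of prefixes do the work. Second, I must confirm that the completion introduces only finitely many additional non-core states, so that minimising leaves the core unchanged and \(f\) lies in the group of units \(d\mathcal{B}_{n,1}\) (equivalently, that \(f^{-1}\) is again synchronizing); this follows because each \(T_i\) is invertible in \(\mathcal{O}_{n,n-1}\) and invertibility is preserved both by the product and by the finite top-level modification.
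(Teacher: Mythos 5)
Your proposal follows essentially the same route as the paper: necessity by counting cones modulo \(n-1\) across a synchronizing-depth complete prefix code (using that \(\overline{\text{sig}_d}\) records the cone count of a core state's image), and sufficiency by decomposing the image of a core state of \(P\) into \(|A|\equiv 1 \pmod{n-1}\) disjoint cones, matching them via Remark~\ref{anti-chain sizes} to a complete prefix code of size \(|A|\), and gluing to produce \(f\) with \(\C(M_f)\cong_S P\). The only cosmetic difference is the final invertibility check, where the paper simply applies the same construction to \(\boldsymbol{T}^{-1}\in\ker(\overline{\text{sig}_d})\) rather than appealing directly to invertibility of the factors; both work.
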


\begin{proof}
\((\Rightarrow):\) Suppose that \(f~\in~d\mathcal{B}_{n, 1}\) is such that the transducer \(P\) is strongly isomorphic to \(\C(M_f)\). If \(U\) is clopen in \(\mathfrak{C}_n^d\), then let \(\operatorname{count}(U)\) be the smallest number of cones in a decomposition of \(U\) into cones. Let \(k\) be the synchronizing length of \(M_f\). It follows that
\begin{align*}
    1+(n-1)\mathbb{Z}    &= \operatorname{count}((\mathfrak{C}_n^d)f) + (n-1)\mathbb{Z}\\
    &=\sum_{w\in (X_n^k)^d} \operatorname{count}((\mathfrak{C}_n^d)f_{M_f,(w) \mathfrak{s}_{M_f}}) + (n-1)\mathbb{Z}\\
     &=\sum_{w\in (X_n^k)^d} (\boldsymbol{T})\overline{\text{sig}_d}  + (n-1)\mathbb{Z}\\
     &=n^{kd} (\boldsymbol{T})\overline{\text{sig}_d}  + (n-1)\mathbb{Z}\\
    &=(\boldsymbol{T})\overline{\text{sig}_d}  + (n-1)\mathbb{Z}.
\end{align*}
The result follows.

\((\Leftarrow):\) Let \(q\in Q_P\) be arbitrary. Then let \(\bigcup_{a\in A} a\mathfrak{C}_n^d\) be a decomposition of \(\im(f_{P, q})\) into disjoint cones.
We have that \(|A| \in 1 + (n-1)\mathbb{Z}\).
Let \(k\in \mathbb{N}\) be greater that \(|A|\), and such that for all \(w\in (X_n^k)^d\) and \(i\in \overline{d}\), we have \(|(q, w)\lambda_P\boldsymbol{\pi}_i| \geq \max\{|a\boldsymbol{\pi}_j|:a\in A, j\in \overline{d }\}\).
For all \(w\in (X_n^k)^d\) let \(a_w\in A\) be such that \(a_w\) is a prefix of \((q, w)\lambda_P\). For all \(a\in A\) we now have that
\[\{((q, w)\lambda_P - a_w)\im(f_{P, (q, w)\pi_T}):w\in (X_n^k)^d \text{ has } a_w = a\}\]
is a partition of \(\mathfrak{C}_n^d\).

As \(|A|\in 1 + (n-1)\mathbb{N}\), there is a complete prefix code \(B\) of size \(|A|\). Let \(\phi: A \to B\) be a bijection. It follows that
\[\{(a_w)\phi((q, w)\lambda_P - a_w)\im(f_{P, (q, w)\pi_P}):w\in (X_n^k)^d\}\]
 is a partition of \(\mathfrak{C}_n^d\).
 We now define an element \(f\in H(\mathfrak{C}_n^d)\) as follows:
 
 If \(w\in (X_n^k)^d\) and \(\overline{x}\in \mathfrak{C}_n^d \) then
 \[(w\overline{x})f = (a_w)\phi((q, w)\lambda_P - a_w)(\overline{x})f_{P, (q, w)\pi_P}.\]
It is routine to verify that \(M_f\) is synchronizing and has core strongly isomorphic to \(P\). Thus \(f\in d\mathcal{S}_{n,1}\), and \(P\) represents an element of \(\widetilde{d\mathcal{O}_{n,1}}\). As \(\operatorname{ker}(\overline{\text{sig}_d})\) is a group, \(\boldsymbol{T}^{-1}\in \operatorname{ker}(\overline{\text{sig}_d})\) and so by the same argument \(P':=\prod_{i\in \overline{d}} (\boldsymbol{T}\boldsymbol{\pi}_i)^{-1}\) also represents and element of \(\widetilde{d\mathcal{O}_{n,1}}\). Thus \(P\) is \(d\mathcal{O}_{n,1}\). As \(P\) is a product of \((1, n)\)-transducers we also have \(\psi_P=id\), so the result follows.
\end{proof}
We now have all the tools to prove Theorem~\ref{2ndtheorem} from the introduction, (the statement is a bit simpler now that we've defined \(\overline{\text{sig}_d}\)).
\begin{theorem}
For all \(d \geq 1\) and \(n\geq 2\) we have
\(\Out(dV_n) \cong \operatorname{ker}(\overline{\text{sig}_d})\rtimes  S_d,\)
where the action of \(S_d\) is the standard permutation of coordinates.
\end{theorem}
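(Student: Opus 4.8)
The plan is to chain together the structural results already established in the paper. First I would invoke Corollary~\ref{outrubin} to replace \(\Out(dV_n)\) by \(d\mathcal{O}_{n,1}\), and then Theorem~\ref{first_semidirect} to split this as \(d\mathcal{K}_{n,1}\rtimes S_d\) with \(S_d\) acting by the standard permutation of coordinates. Thus it remains only to identify the normal factor \(d\mathcal{K}_{n,1}\) with \(\operatorname{ker}(\overline{\text{sig}_d})\) in a way that intertwines the two \(S_d\)-actions.

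For the identification of the normal factors, I would use the embedding \(\alpha:d\mathcal{K}_{n,1}\to \mathcal{O}_{n,n-1}^d\) of Definition~\ref{alpha}, which is well-defined by Theorem~\ref{product_decomposition} (every \([T]_{\cong_S}\in d\mathcal{K}_{n,1}\) is strongly isomorphic to a product \(\prod_{i\in\overline{d}}T_i\) of \(\mathcal{O}_{n,n-1}\)-transducers). Lemma~\ref{lemma2} says precisely that a product \(\prod_{i\in\overline{d}}\boldsymbol{T}\boldsymbol{\pi}_i\) is strongly isomorphic to a transducer representing an element of \(d\mathcal{K}_{n,1}\) if and only if \(\boldsymbol{T}\in\operatorname{ker}(\overline{\text{sig}_d})\). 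Hence the image of \(\alpha\) is exactly \(\operatorname{ker}(\overline{\text{sig}_d})\), and since \(\alpha\) is an injective group homomorphism it restricts to an isomorphism from \(d\mathcal{K}_{n,1}\) onto \(\operatorname{ker}(\overline{\text{sig}_d})\).

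The step I expect to require the most care is verifying that \(\alpha\) conjugates the \(S_d\)-action appearing in Theorem~\ref{first_semidirect} to the standard permutation-of-coordinates action on \(\operatorname{ker}(\overline{\text{sig}_d})\subseteq\mathcal{O}_{n,n-1}^d\). Recall that the splitting in Theorem~\ref{first_semidirect} is realised by the coordinate-permuting homeomorphisms \(h\in d\mathcal{B}_{n,1}\), and that conjugating a product transducer \(\prod_{i\in\overline{d}}T_i\) by such an \(h\) simply permutes its factors. Concretely, I would check that for \(f\in S_d\) and \([T]_{\cong_S}\in d\mathcal{K}_{n,1}\) the conjugate by the homeomorphism associated to \(f\) has \(i\)-th factor strongly isomorphic to \(T_{(i)f}\) (up to the convention fixing the direction of \(f\)), which is exactly the standard action of \(S_d\) on the coordinates of \(\mathcal{O}_{n,n-1}^d\). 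This uses that \(\psi_T=\mathrm{id}\) on \(d\mathcal{K}_{n,1}\), so that conjugation by a pure coordinate permutation produces no mixing of coordinates beyond the relabelling dictated by \(f\).

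Once the two actions are seen to agree under \(\alpha\), combining the three isomorphisms \(\Out(dV_n)\cong d\mathcal{O}_{n,1}\cong d\mathcal{K}_{n,1}\rtimes S_d\cong \operatorname{ker}(\overline{\text{sig}_d})\rtimes S_d\) yields the claim. The only genuinely new verification beyond citing earlier results is this equivariance of \(\alpha\); everything else is bookkeeping on the already-established embedding and the kernel description of Lemma~\ref{lemma2}.
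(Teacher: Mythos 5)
Your proposal follows the paper's own proof exactly: the paper also chains Corollary~\ref{outrubin}, Theorem~\ref{first_semidirect}, and Lemma~\ref{lemma2} (via the embedding \(\alpha\) of Definition~\ref{alpha}). Your additional check that \(\alpha\) intertwines the two \(S_d\)-actions is a point the paper leaves implicit, and it is a worthwhile verification, but it does not change the route.
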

\begin{proof}
By Corollary~\ref{outrubin} we have \(\Out(dV_n) \cong d\mathcal{O}_{n,1}\). Thus the result follows from Theorem~\ref{first_semidirect} and Lemma~\ref{lemma2}.
\end{proof}
\begin{corollary}
For all \(d\geq 1\) we have
\(\Out(dV) \cong \Out(V)\mathlarger{\mathlarger{\mathlarger{\wr}}} S_d\) (using the standard action of \(S_d\) on \(d\) points).
\end{corollary}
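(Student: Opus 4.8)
The plan is to obtain this corollary as the \(n=2\) specialization of the preceding theorem, the key point being that the signature obstruction vanishes in this case. First I would apply the preceding theorem with \(n=2\) to write \(\Out(dV) = \Out(dV_2) \cong \operatorname{ker}(\overline{\text{sig}_d}) \rtimes S_d\), where \(S_d\) acts by the standard permutation of the \(d\) coordinates.

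Next I would observe that when \(n=2\) we have \(n-1=1\), so the target group \((\mathbb{Z}/(n-1)\mathbb{Z}, \times)\) of the signature homomorphism is trivial. Consequently \(\overline{\text{sig}_d}\) is the trivial homomorphism, and its kernel is all of \(\mathcal{O}_{n, n-1}^d = \mathcal{O}_{2,1}^d\). Since \(n-1=1\) also gives \(\mathcal{O}_{n, n-1} = \mathcal{O}_{2,1} = \mathcal{O}_{n,1}\), and since Corollary~\ref{outrubin} applied with \(d=1\) yields \(\Out(V) = \Out(1V_2) \cong 1\mathcal{O}_{2,1} = \mathcal{O}_{2,1}\), I may identify the base group with \(\Out(V)^d\).

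Finally I would recognise \(\mathcal{O}_{2,1}^d \rtimes S_d\), with \(S_d\) permuting the \(d\) coordinates of the direct power, as exactly the permutational wreath product \(\mathcal{O}_{2,1} \mathlarger{\mathlarger{\mathlarger{\wr}}} S_d\); substituting \(\mathcal{O}_{2,1} \cong \Out(V)\) then gives \(\Out(dV) \cong \Out(V) \mathlarger{\mathlarger{\mathlarger{\wr}}} S_d\). There is no genuine obstacle here, as the argument is a direct specialization; the only point meriting care is confirming that the semidirect-product action supplied by the preceding theorem is precisely the base-coordinate permutation action defining the wreath product, which is immediate from the statement that \(S_d\) acts by the standard permutation of coordinates.
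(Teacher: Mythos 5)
Your proposal is correct and follows essentially the same route as the paper: the paper's proof likewise deduces the corollary from the preceding theorem by noting that \((\mathbb{Z}/(2-1)\mathbb{Z}, \times)\) is trivial, so that \(\operatorname{ker}(\overline{\text{sig}_d})\) is the full direct power and the semidirect product is the wreath product. Your additional step identifying \(\mathcal{O}_{2,1}\) with \(\Out(V)\) via the \(d=1\) case is a correct and slightly more explicit spelling-out of what the paper leaves implicit.
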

\begin{proof}
This follows from the previous theorem together with the observation that \((\mathbb{Z}/(2-1)\mathbb{Z}, \times)\) is the trivial group.
\end{proof}
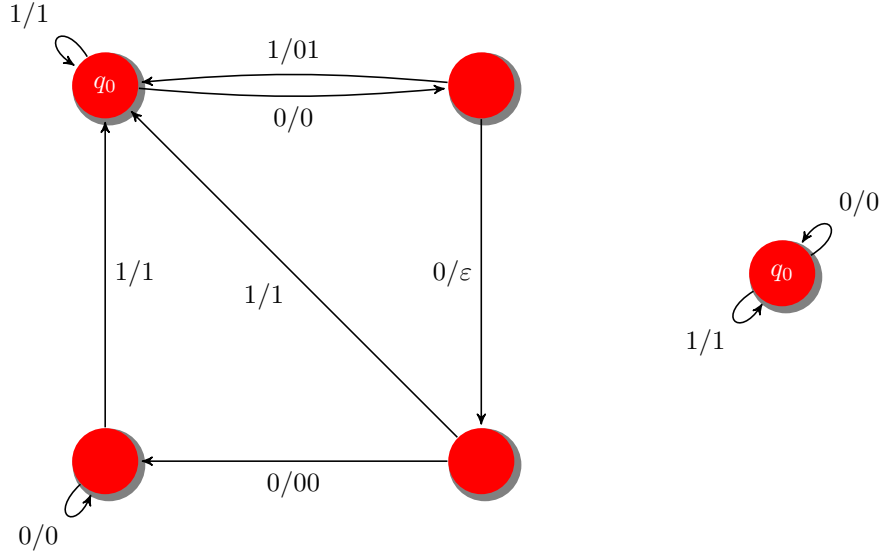
\begin{figure}
\begin{center}
\begin{tikzpicture}[->,>=stealth',shorten >=1pt,auto,node distance=5cm,on grid,semithick,
                    every state/.style={fill=red,draw=none,circular drop shadow,text=white}]  \node [state] (A)                {$q_0$};
  \node [state] (B)  [right= of A] {};
  \node [state] (C)  [below= of B] {};
  \node [state] (D)  [below= of A] {};
  \node [at={(9,-2.5)},state] (E) {$q_0$};
 \path [->]
 (A) edge [out=122,in=147, loop] node [swap]{$1/1$} (A)
 
 (A) edge [out=355,in=185]        node [swap]{$0/0$} (B)
 
 (B) edge [out=175,in=5]        node [swap]{$1/01$} (A)
 (B) edge                        node [swap]{$0/\varepsilon$} (C)
 (C) edge                        node {$1/1$} (A)
 (C) edge                        node {$0/00$} (D)
 (D) edge                        node [swap]{$1/1$} (A)
 (D) edge [out=223,in=247, loop] node [swap]{$0/0$} (D)
 (E) edge [out=212,in=237, loop] node [swap]{$1/1$} (E)
 (E) edge [out=32,in=57, loop] node [swap]{$0/0$} (E);
 \end{tikzpicture}
 \end{center}
\caption{Two transducers with domain and range \(X_2^*\)}\label{figure1}
\end{figure}

\section{Rationality and Representations}\label{ref}
Unfortunately, unlike with \(\mathcal{B}_{2, 1}\), representing elements of \(2\mathcal{B}_{2, 1}\) with transducers can sometimes result in a transducer which has infinitely many states. In Figure~\ref{baker} we see that the baker's map when represented by a transducer, in the way described in this paper, has infinitely many states.

However, if we want our pictures to be finite, then we can consider the submonoid of \((X_n^*)^d\) consisting of those elements \(w\in (X_n^*)^d\) such that \(|w\boldsymbol{\pi}_i|\) is the same for all \(i\).
As elements of \(d\mathcal{B}_{n,1}\) are synchronizing, it follows that if we restrict the domain of a minimal transducer representing an element of \(d\mathcal{B}_{n, 1}\) to this submonoid, then we will only need finitely many states to represent it.
In Figure \ref{figure4} we see the baker's map represented in this fashion.
The main problem with this representation is that composing functions represented with these transducers is much harder (due to the fact that the transducers have distinct domains and ranges).

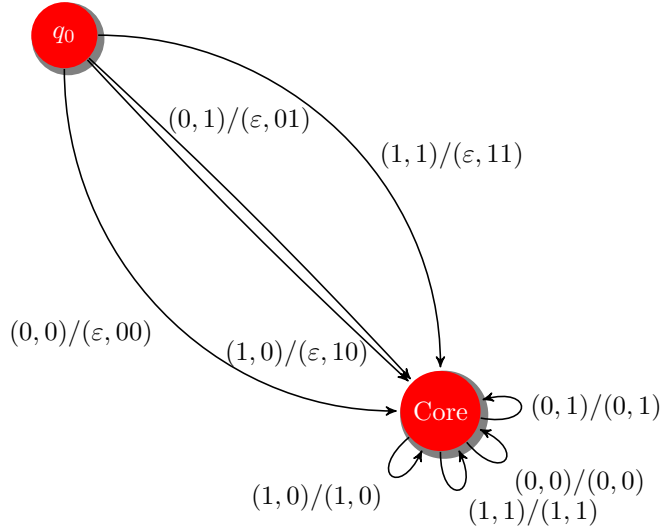
\begin{figure}
\begin{center}
\begin{tikzpicture}[->,>=stealth',shorten >=1pt,auto,node distance=5cm,on grid,semithick,
                    every state/.style={fill=red,draw=none,circular drop shadow,text=white}]  
  \node [state] (A)                {$q_0$};
  \node [] (B)  [right= of A]              {};
  \node [state] (C)  [below= of B] {Core};
 \path [->]
 (A) edge [out=313,in= 137] node [swap, xshift=50, yshift = -40]
 {$(1, 0)/(\varepsilon, 10)$} (C)
 (A) edge [out=317,in=133] node [swap, yshift = 45, xshift = 25]
 {$(0, 1)/(\varepsilon, 01)$} (C)
 (A) edge [out=0,in=90] node [swap, xshift=73]
 {$(1,1)/(\varepsilon,11)$} (C)
 (A) edge [out=270,in=180] node [swap]
 {$(0,0)/(\varepsilon,00)$} (C)

 (C) edge  [out=310,in=335, loop]
 node  [swap]{$(0, 0)/(0, 0)$} (C)
 (C) edge  [out=350,in=15, loop]
 node [swap]{$(0, 1)/(0, 1)$} (C)
 (C) edge [out=220,in=245, loop] node [swap]
 {$(1,0)/(1, 0)$} (C)
 (C) edge [out=270,in=295, loop] node [swap]
 {$(1,1)/(1, 1)$} (C);
 \end{tikzpicture}
 \end{center}
\caption{The subtransducer of the transducer in Figure \ref{baker} with \(\{w\in (\{0, 1\}^*)^2: |w\boldsymbol{\pi}_0|=|w\boldsymbol{\pi}_1|\}\) as domain and no longer accessible states removed.}\label{figure4}
\end{figure}
Figure~\ref{figure1} displays two elements of \(\mathcal{B}_{2,1}\), where the \(q_0\) are the initial states.
These transducers are particularly nice as they are finite. The first one acts by swapping the strings \(``0"\) and \(``00"\) wherever it sees them, and the second one is the identity. These transducers are special in that they are each equal to their own cores.
Thus, these pictures also represent elements of \(\mathcal{O}_{2,1}\) if we ignore the choice of initial state.
In Figure~\ref{figure2} we see the categorical product of the transducers in Figure~\ref{figure1}, which represents an element of \(2\mathcal{B}_{2,1}\) (and also an element of \(2\mathcal{O}_{2,1}\)).

\begin{figure}
\begin{center}
\begin{tikzpicture}[->,>=stealth',shorten >=1pt,auto,node distance=5cm,on grid,semithick,
                    every state/.style={fill=red,draw=none,circular drop shadow,text=white}]  \node [state] (A)                {$q_0$};
  \node [state] (B)  [right= of A] {};
  \node [state] (C)  [below= of B] {};
  \node [state] (D)  [below= of A] {};
 \path [->]
 (A) edge [out=122,in=147, loop] node [swap]{$(1, \varepsilon)/(1, \varepsilon)$} (A)
 (A) edge [out=355,in=185]        node [swap]{$(0, \varepsilon)/(0, \varepsilon)$} (B)
 (A) edge [out=182,in=207, loop] node [swap]{$(\varepsilon,1)/( \varepsilon,1)$} (A)
 (A) edge [out=62,in=87, loop] node [swap]{$(\varepsilon,0)/( \varepsilon,0)$} (A)
 
 (B) edge [out=175,in=5]        node [swap]{$(1, \varepsilon)/(01, \varepsilon)$} (A)
 (B) edge                        node [swap]{$(0, \varepsilon)/(\varepsilon, \varepsilon)$} (C)
 (B) edge [out=10,in=345, loop] node [swap, xshift = 55pt]{$(\varepsilon,1)/( \varepsilon,1)$} (B)
 (B) edge [out=75,in=100, loop] node [swap]{$(\varepsilon,0)/( \varepsilon,0)$} (B)
 
 (C) edge                        node {$(1, \varepsilon)/(1, \varepsilon)$} (A)
 (C) edge                        node {$(0, \varepsilon)/(00, \varepsilon)$} (D)
 (C) edge [out=260,in=285, loop] node [swap]{$(\varepsilon,1)/( \varepsilon,1)$} (C)
 (C) edge [out=340,in=5, loop] node [swap]{$(\varepsilon,0)/( \varepsilon,0)$} (C)
 
 (D) edge                        node [swap,xshift = -60pt]{$(1, \varepsilon)/(1, \varepsilon)$} (A)
 (D) edge [out=223,in=247, loop] node [swap]{$(0, \varepsilon)/(0, \varepsilon)$} (D)
 (D) edge [out=282,in=307, loop] node [swap]{$(\varepsilon,1)/( \varepsilon,1)$} (D)
 (D) edge [out=142,in=167, loop] node [swap]{$(\varepsilon,0)/( \varepsilon,0)$} (D);
 \end{tikzpicture}
 \end{center}
\caption{The transducer obtained by taking the categorical product of the transducers in Figure~\ref{figure1}}\label{figure2}
\end{figure}
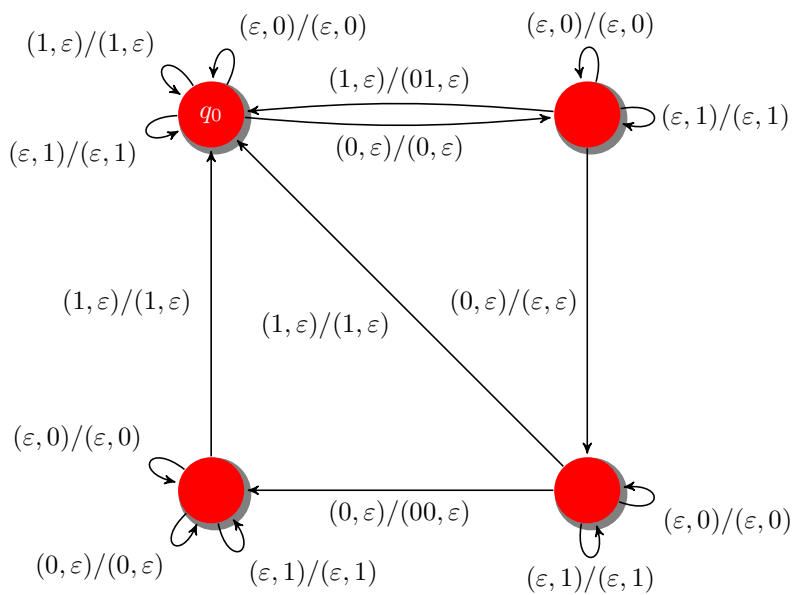

In \cite{bleak2016}, it is shown that \(\mathcal{B}_{2,1}\) consists of rational homeomorphims, and in \cite{Belk2016} Theorem 5.2 it is shown that \(2V\) naturally embeds in \(\mathcal{R}_4\) via conjugation by a homeomorphism between \(\mathfrak{C}_4\) and \(\mathfrak{C}_2^2\). The map of \cite{Belk2016} Theorem 5.2 acts by converting a pair of words \((x_0x_1 \ldots, y_0y_1 \ldots)\) to a word \((x_0,y_0)(x_1, y_1) \ldots\) over the alphabet \(\{(0, 0), (0, 1), (1, 0), (1, 1)\}\). It is natural to ask if the same map gives an embedding of \(2\mathcal{B}_{2,1}\) into \(\mathcal{R}\). However it is routine to verify that if we conjugate the homeomorphism defined by the transducer of Figure~\ref{figure2}, then the resulting map is not rational. This happens because this transducer is a product of a transducer which never resizes words, with a transducer which does this arbitrarily amounts (for example by reading the words \((01)^n\)). So it seems that there is no good way to describe the groups \(\AdVn\) using only finite transducers.

\bibliography{bib.bib}

\def\cprime{$'$} \def\cprime{$'$} \def\cprime{$'$} \def\cprime{$'$}
\begin{thebibliography}{10}

\bibitem{Belk2016}
James Belk and Collin Bleak.
\newblock Some undecidability results for asynchronous transducers and the
  {B}rin-{T}hompson group $2{V}$.
\newblock {\em Transactions of the American Mathematical Society},
  369(5):3157–3172, Dec 2016.

\bibitem{bleak2016}
Collin Bleak, Peter Cameron, Yonah Maissel, Andrés Navas, and Feyishayo
  Olukoya.
\newblock The further chameleon groups of {R}ichard {T}hompson and {G}raham
  {H}igman: Automorphisms via dynamics for the {H}igman groups ${G}_{n,r}$,
  2016.

\bibitem{Bleak2010}
Collin Bleak and Daniel Lanoue.
\newblock A family of non-isomorphism results.
\newblock {\em Geometriae Dedicata}, 146(1):21--26, Jun 2010.

\bibitem{Brin_AutF}
Matthew~G. Brin.
\newblock The chameleon groups of richard j. thompson: Automorphisms and
  dynamics.
\newblock {\em Publications mathématiques de l’IHÉS}, 84(1):5–33, Dec
  1996.

\bibitem{Brin2004}
Matthew~G. Brin.
\newblock Higher dimensional {T}hompson groups.
\newblock {\em Geometriae Dedicata}, 108(1):163--192, Oct 2004.

\bibitem{Brin2005}
Matthew~G. Brin.
\newblock Presentations of higher dimensional {T}hompson groups.
\newblock {\em Journal of Algebra}, 284(2):520–558, Feb 2005.

\bibitem{Brin_1998}
Matthew~G. Brin and Fernando Guzmán.
\newblock Automorphisms of generalized thompson groups.
\newblock {\em Journal of Algebra}, 203(1):285–348, May 1998.

\bibitem{GNS2000}
R.~I. Grigorchuk, V.~V. Nekrashevich, and V.~I. Sushchanski{\u\i}.
\newblock Automata, dynamical systems, and groups.
\newblock {\em Proc. Steklov Inst. Math}, 231:128--203, 2000.

\bibitem{hennig2011}
Johanna Hennig and Francesco Matucci.
\newblock Presentations for the higher dimensional {T}hompson's groups n{V},
  2011.

\bibitem{lawson2019higher}
Mark~V Lawson and Alina Vdovina.
\newblock Higher dimensional generalizations of the thompson groups, 2019.

\bibitem{MMN2016cohomological}
Conchita Mart{\'\i}nez-P{\'e}rez, Francesco Matucci, and Brita~EA Nucinkis.
\newblock Cohomological finiteness conditions and centralisers in
  generalisations of thompson’s group v.
\newblock In {\em Forum Mathematicum}, volume~28, pages 909--921. De Gruyter,
  2016.

\bibitem{OlukoyaAutTnr}
Feyishayo Olukoya.
\newblock Automorphisms of the generalised {T}hompson's group {$T_{n,r}$}.
\newblock In Preparation, 2018.

\bibitem{olukoya2019automorphisms}
Feyishayo Olukoya.
\newblock Automorphisms of the generalised {T}hompson's group ${T}_{n,r}$,
  2019.

\bibitem{quick2019}
Martyn Quick.
\newblock Permutation-based presentations for {B}rin's higher-dimensional
  {T}hompson groups $n{V}$, 2019.

\bibitem{Rubin}
Matatyahu Rubin.
\newblock Locally moving groups and reconstruction problems.
\newblock In {\em Ordered groups and infinite permutation groups}, volume 354
  of {\em Math. Appl.}, pages 121--157. Kluwer Acad. Publ., Dordrecht, 1996.

\end{thebibliography}
\bibliographystyle{plain}
\end{document}